\numberwithin{equation}{section}
\newtheorem{theorem}{Theorem}[section]
\newtheorem{corollary}[theorem]{Corollary}
\newtheorem{conjecture}[theorem]{Conjecture}
\newtheorem{lemma}[theorem]{Lemma}
\theoremstyle{definition}
\newtheorem{definition}[theorem]{Definition}
\newtheorem{remark}[theorem]{Remark}
\newtheorem{assumption}[]{Assumption}
\title[Simplifying branched covering surface-knots]{Simplifying branched covering surface-knots by chart moves involving black vertices}
\author{Inasa Nakamura}
\address{Graduate School of Mathematical Sciences, The University of Tokyo\newline
3-8-1 Komaba, Tokyo 153-8914, Japan \newline
TEL:+81-3-5465-7001}
\email{inasa@ms.u-tokyo.ac.jp}
\subjclass[2010]{Primary 57Q45; Secondary 57Q35}
\keywords{surface-knot; 2-dimensional braid; chart; 1-handle, black vertex}
\address{Current address: Faculty of Electrical, Information and Communication Engineering, 
Institute of Science and Engineering, 
Kanazawa University \newline
Kakumamachi, Kanazawa, 920-1192, Japan \newline
TEL: +81-76-264-5111}
\email{inasa@se.kanazawa-u.ac.jp}
\begin{document}

\begin{abstract}
A branched covering surface-knot is a surface-knot in the form of a branched covering over an oriented surface-knot $F$, where we include the case when the covering has no branch points. A branched covering surface-knot is presented by a graph called a chart on a surface diagram of $F$. 
We can simplify a branched covering surface-knot by an addition of 1-handles with chart loops to a form such that its chart is the union of free edges and 1-handles with chart loops. We investigate properties of such simplifications for the case when branched covering surface-knots have a non-zero number of branch points, using chart moves involving black vertices. 
\end{abstract}

\maketitle

\section{Introduction}\label{sec1}

A {\it surface-knot} is the image of a smooth embedding of a closed connected surface into the Euclidean 4-space $\mathbb{R}^4$ \cite{CKS, Kamada02, Kamada17}. 
We consider oriented surface-knots. 
For a surface-knot $F$, we consider another surface-knot in the form of a branched covering over $F$, called a branched covering surface-knot. Two branched covering surface-knots over $F$ are equivalent if one is taken to the other by an ambient isotopy of $\mathbb{R}^4$ whose restriction to a tubular neighborhood of $F$ is fiber-preserving. 
A branched covering surface-knot over $F$, denoted by $(F, \Gamma)$, is presented by a graph $\Gamma$ called a chart on a surface diagram of $F$. For simplicity, we often 
identify a surface diagram of $F$ with $F$ itself. 

In \cite{N5}, we showed that we can deform a branched covering surface-knot to a simplified form in terms of charts by an addition of 1-handles with chart loops. And in \cite{N6}, we investigated such simplifications, where we included the case of \lq\lq unbranched" covering surface-knots, which is the case when they have no branch points. 
The aim of this paper is to investigate further such simplifications of branched covering surface-knots for the case when they have branch points. In terms of charts, this case is when the charts have degree one vertices called black vertices. 
We use chart moves called CII- and CIII-moves which involve black vertices, and we develop argument especially for branched covering surface-knots with black vertices. Owing to black vertices, we can apply simple deformations especially for this case, which can be considered as a generalization of an elementary deformation of Kamada's charts. 

A 3-disk $h$ embedded in $\mathbb{R}^4$ is called a {\it 1-handle} attaching to $F$ if the intersection $F \cap h$ is a disjoint union of a pair of 2-disks embedded in $\partial h$. The 2-disks in $\partial h$ are called {\it ends} of $h$. 
The surface-knot obtained from $F$ by an {\it addition of a 1-handle} $h$ is the surface

\[
F \cup \partial h\backslash \mathrm{int} (F \cap h),
\]
which is denoted by $F+h$. 
We say a 1-handle $h$ is {\it trivial} if there exists a 3-ball $B^3$ containing $h$ such that $\partial B^3 \cap h$ are the ends of $h$ and $F\cap B^3$ is a 2-disk in $\partial B^3$ containing the ends. 
In this paper, for simplicity, we assume that 1-handles are trivial. Since $F+h$ is orientable, we give $F+h$ the orientation induced from that of $F$.

\begin{assumption}\label{assump1}
In this paper, for simplicity, we assume that 1-handles are trivial, and we assume that $F$ is 
an unknotted surface-knot in the standard form, that is, $F$ is in the form of the boundary of a handlebody in $\mathbb{R}^3 \times \{0\} \subset \mathbb{R}^4$. 
\end{assumption}

For a 2-disk $B^2$ with a boundary point $x \in \partial B^2$ and $I=[0,1]$, we identify a 1-handle $h$ with $B^2 \times I$ such that the ends are $B^2 \times \{0,1\}$. 
Assume that both ends of $h$ are attached to a 2-disk $E$ in $F$. 
We call the {\it core loop} the oriented closed path obtained from $\{x\} \times I \subset \partial h$ by connecting the initial and terminal points $\{ x\} \times \{0,1\}$ by a simple arc in $E$, with the orientation induced from that of $I$. 
We determine the {\it cocore} of $h$ by the oriented closed path $\partial B^2 \times \{0\} \subset h$, with the orientation of $\partial B^2$; see Fig. \ref{fig1}. 
In this paper, for simplicity, we do not distinguish framings of 1-handles. There are two types of framings, presented by the core loop and the cocore as indicated in Fig. \ref{fig1} \cite{Boyle2, Livingston}, see also \cite[Lemma 4.2]{N5} and \cite[Remark 1.1]{N6}. 

\begin{figure}[ht]
\centering
\includegraphics*[height=3.5cm]{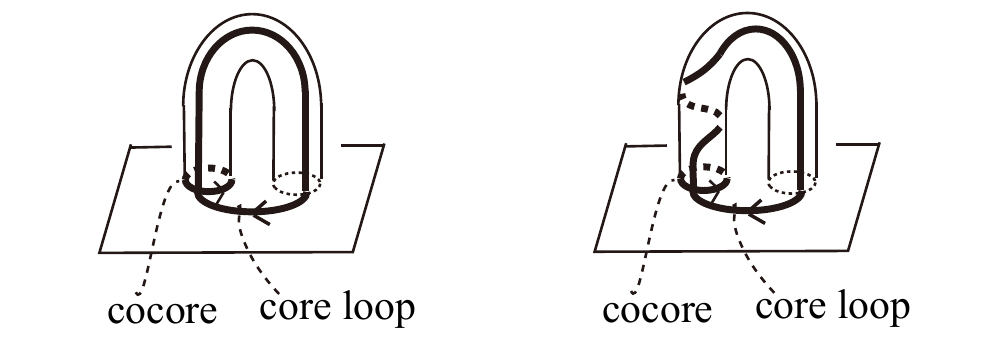}
\caption{The core loop and the cocore of a 1-handle. There are two types.}
\label{fig1}
\end{figure}

 A chart is a finite graph satisfying certain conditions such that each edge is equipped with a label and an orientation. A branched covering surface-knot over $F$ is presented by a chart on $F$. 
A {\it chart loop} is a closed path consisting of a closed edge of a chart or diagonal edges of a chart connected with vertices of degree 4 (crossings). 
 
In this paper, we treat mainly three types of 1-handles: $h(e,e)$, $h(\sigma_i, e)$, and $h(\sigma_i, \sigma_j^\epsilon)$. 
We assume that a 1-handle is attached to a 2-disk. 
We denote by $h(e,e)$ a 1-handle equipped with an empty diagram (empty chart). 
We denote by $h(\sigma_i, e)$ a 1-handle equipped with a chart loop parallel to the core loop with the label $i$ and the orientation coherent with that of the core loop. We denote by $h(\sigma_i, \sigma_j^\epsilon)$ the 1-handle obtained from $h(\sigma_i, e)$ by an addition of a chart loop parallel to the cocore with the label $j$ and the orientation coherent (respectively, incoherent) with that of the cocore if $\epsilon=+1$ (respectively, $-1$). We call each of such 1-handles a {\it 1-handle with chart loops}, or simply a {\it 1-handle}. 

A {\it free edge} is an edge of a chart whose end points are black vertices. 
Let $\Gamma_0$ be an empty chart or a chart consisting of a disjoint union of several free edges on $F$. 
Let $(F, \Gamma_0)$ be the branched covering surface-knot determined by $\Gamma_0$, and let $h(a_1, b_1)$, $\ldots$, $h(a_g, b_g)$ be 1-handles with chart loops. We take mutually distinct embedded 2-disks $E_1$, \ldots, $E_g$ in $F$ such that there are no edges nor vertices of $\Gamma_0$ on the 2-disks, and we attach 1-handles to these disks. 
We denote the branched covering surface-knot which is the result of the 1-handle addition  by $(F', \Gamma')=(F, \Gamma_0) + \sum_{i=1}^g h(a_i,b_i)$. Note that since $\Gamma_0$ is a disjoint union of free edges, the presentation is well-defined.  

We showed in \cite{N5} the following results. 
Under Assumption \ref{assump1}, the results are written as follow. See \cite{N6} for the same notations and terminologies used here. Let $N$ be a positive integer.

\begin{theorem}[{\cite[Theorem 1.6]{N5}}]\label{thm1-2}
Let $(F, \Gamma)$ be a branched covering surface-knot of degree $N$. 
By an addition of finitely many 1-handles in the form $h(\sigma_i, e)$ or $h(e,e)$ $(i \in \{1, \ldots,N-1\})$, to appropriate places in $F$, $(F, \Gamma)$ deforms to 
\begin{equation}\label{eq1-1}
(F, \Gamma_0) +\sum_{k} h(\sigma_{i_k}, e)+\sum_l h(\sigma_{i_l}, \sigma_{j_l}^{\epsilon_l})+\sum h(e,e),
\end{equation}
where $i_k, i_l, j_l \in \{1.\ldots,N-1\}, |i_l-j_l|>1$ and $\epsilon_l\in \{+1, -1\}$, and $\Gamma_0$ is a chart consisting of several (maybe no) free edges. 
\end{theorem}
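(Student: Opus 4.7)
The plan is to absorb all of $\Gamma$, except possibly a collection of free edges, into a collection of newly attached 1-handles carrying chart loops. The key mechanism is that an edge of $\Gamma$ of label $i$ can be swallowed by sliding the chart loop of a newly attached $h(\sigma_i, e)$ along the edge: a CII-move pushes the edge off $F$ onto the handle, after which what remains on $F$ can be contracted by a CI-move. Combined with Theorem~\ref{thm1-2}'s allowance of finitely many handle additions, this gives enough flexibility to eliminate every interior edge and interior vertex of $\Gamma$ from $F$.

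More precisely, I would proceed by induction on the number of interior vertices (crossings and degree-$6$ vertices) of $\Gamma$. For each edge of $\Gamma$ with label $i$, choose a small 2-disk in $F$ disjoint from $\Gamma$ and attach $h(\sigma_i, e)$ there. By sliding the chart loop of this handle along a thin neighbourhood of the edge and then applying a CII-move against the edge, the edge is transferred onto the 1-handle. When the moving loop crosses a second edge of label $j$, the result depends on $|i-j|$: when $|i-j|>1$ the labels commute and one obtains, after a CII-move, a single handle carrying two parallel loops of the required form $h(\sigma_i,\sigma_j^\epsilon)$, the sign $\epsilon$ being determined by the relative orientations; when $|i-j|=1$ a CIII-move realizes the braid relation $\sigma_i\sigma_j\sigma_i=\sigma_j\sigma_i\sigma_j$, which after adding auxiliary $h(\sigma_k, e)$ to supply the required parallel strands, can be reduced to handles of the allowed types; when $i=j$ the two loops cancel or combine to give $h(e,e)$ or $h(\sigma_i,e)$.

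Degree-$6$ vertices of $\Gamma$ are absorbed in the same way, the CIII-move being applied in a neighbourhood prepared by an additional $h(\sigma_k,e)$. Once all interior material has been eliminated, each remaining black vertex lies on a short residual arc whose other endpoint is another black vertex, so the remaining graph $\Gamma_0$ is a disjoint union of free edges, possibly empty. The procedure terminates because each step strictly decreases the number of interior vertices and edges of $\Gamma$ on $F$, at the cost of adding finitely many handles of the listed three types. Framing issues are irrelevant by the convention recorded just before Theorem~\ref{thm1-2} (see also \cite[Lemma 4.2]{N5}).

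The main obstacle, in my view, is the case $|i-j|=1$ and the treatment of degree-$6$ vertices: one must verify that the braid relation can always be staged so that the resulting two-loop 1-handles satisfy the separation condition $|i_l-j_l|>1$, and that no other handle type is produced along the way. This is the core local case analysis of the argument; once it is settled, the induction and global book-keeping of free edges is comparatively routine.
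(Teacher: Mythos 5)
This theorem is quoted from \cite{N5} and is not reproved in the present paper, but the key mechanism of its proof is displayed here in the proof of Theorem \ref{thm1-16}: for each white vertex one attaches $h(\sigma_i,e)$ near a non-middle arc of label $i$ and then \emph{slides an end of the 1-handle} so as to collect the white vertex (and subsequently the crossings) onto the handle itself (Figs.~\ref{fig11} and \ref{fig12}). Your proposal substitutes a different mechanism --- pushing edges of $\Gamma$ off $F$ by CII-moves against the chart loop of a newly attached handle --- and that mechanism does not work as described. A CII-move only lets an edge \emph{terminating in a black vertex} pass an edge of a different label with $|i-j|>1$; a closed chart loop of a handle meeting an interior edge of $\Gamma$ merely acquires crossings (CI-M2), it does not absorb the edge. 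The elimination device actually available (Lemma \ref{lem2-3}) applies to chart \emph{loops}, not to edges incident to white vertices, so your basic "swallowing" step is not a C-move consequence.

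The unresolved point you flag at the end --- degree-$6$ vertices and the case $|i-j|=1$ --- is not a routine local check but the entire content of the theorem, and your sketch does not contain the idea needed to settle it. CIII-moves require a black vertex already attached to a non-middle arc of a white vertex, which is not available in general, and realizing the braid relation $\sigma_i\sigma_j\sigma_i=\sigma_j\sigma_i\sigma_j$ by C-moves \emph{creates} white vertices rather than removing them, so your claim that each step strictly decreases the number of interior vertices is not justified and the induction need not terminate. Finally, the constraint $|i_l-j_l|>1$ in the conclusion is not something you can arrange by "staging the braid relation": it holds because the two loops on a handle are parallel to the core and the cocore and must therefore present commuting braids, and because adjacent-label edges of a chart meet only at white vertices, all of which have been transferred onto handles by the end-sliding move. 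Without that move (Fig.~\ref{fig12}) the argument has no way to dispose of white vertices, so there is a genuine gap.
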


\begin{theorem}[{\cite[Theorem 1.8]{N5}}]\label{thm1-4}
Let $(F, \Gamma)$ be a branched covering surface-knot of degree $N$.  By an addition of finitely many 1-handles in the form $h(\sigma_i, e)$, $h(\sigma_i, \sigma_j^\epsilon)$ or $h(e,e)$ $(i, j \in \{1, \ldots,N-1\}, |i-j|>1, \epsilon \in \{+1, -1\})$, to appropriate places in $F$, $(F, \Gamma)$  deforms to  
\begin{equation}\label{eq1-3}
(F, \Gamma_0) +\sum_k h(\sigma_{i_k}, e)+\sum h(e,e),
\end{equation}
where $i_k \in \{1.\ldots,N-1\}$ and $\Gamma_0$ is a chart consisting of several free edges. 
\end{theorem}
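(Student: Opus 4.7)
The plan is to build on Theorem~\ref{thm1-2}: first deform $(F,\Gamma)$ to the intermediate form \eqref{eq1-1}, and then eliminate the mixed summands $h(\sigma_{i_l},\sigma_{j_l}^{\epsilon_l})$ one by one using chart moves involving black vertices. The extra resource needed for this elimination is precisely the pool of free edges in $\Gamma_0$ guaranteed by the hypothesis that $(F,\Gamma)$ has branch points. Since the present theorem permits, during the deformation, the addition of 1-handles of the form $h(\sigma_i,\sigma_j^\epsilon)$ itself, we have enough flexibility to introduce auxiliary mixed handles, cancel them against the existing ones, and absorb any residue into free edges of $\Gamma_0$.

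For a single summand $h(\sigma_i,\sigma_j^\epsilon)$ of \eqref{eq1-1}, the cocore carries a closed chart loop of label $j$. I would select a free edge of $\Gamma_0$, slide one of its black vertices along an arc in $F$ to a point adjacent to the cocore loop, and then apply a CII- or CIII-move using this black vertex to cut the closed cocore loop into an arc whose endpoints are two black vertices. This converts the cocore loop into a pair of free edges (which are then absorbed back into $\Gamma_0$) and leaves the handle in the form $h(\sigma_i,e)$. Iterating this local procedure over all $h(\sigma_{i_l},\sigma_{j_l}^{\epsilon_l})$ summands terminates after finitely many steps and yields the target decomposition \eqref{eq1-3}.

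The main obstacle is label compatibility: the cut described above only works when the black vertex being slid in carries label $j_l$. When $\Gamma_0$ does not already contain a free edge of label $j_l$, I would first add an auxiliary 1-handle $h(\sigma_{j_l},e)$ (permitted by the hypothesis) and then use a CIII-move between its core loop and an existing black vertex of $\Gamma_0$ to release a free edge of the required label. A secondary point of care is that any auxiliary $h(\sigma_i,\sigma_j^\epsilon)$ handle which we need to introduce along the way must itself be reducible to the form $h(\sigma_i,e)$ by the same procedure; I would argue inductively on the total number of cocore chart loops remaining in the decomposition, using the commutation constraint $|i-j|>1$ built into Theorem~\ref{thm1-2} to guarantee that the local CII- and CIII-moves remain well-posed and do not create new obstructions.
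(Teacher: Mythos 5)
First, note that the paper does not actually prove Theorem~\ref{thm1-4}: it is imported verbatim from \cite[Theorem 1.8]{N5}, so there is no in-paper proof to match against. That said, your strategy is essentially the one this paper uses for its own Theorem~\ref{thm1-8} (via Lemmas~\ref{lem4-1}--\ref{lem4-4}), and therein lies the gap: that machinery, and your argument, require a free edge to work with, i.e.\ $b(\Gamma)>0$. Theorem~\ref{thm1-4} carries no such hypothesis. Since CI-, CII- and CIII-moves all preserve the number of black vertices and the added 1-handles carry none, a chart with $b(\Gamma)=0$ can never acquire a free edge during the deformation; in that case $\Gamma_0$ in \eqref{eq1-1} is empty, your ``pool of free edges'' does not exist, and the elimination procedure has nothing to run on. The proof in \cite{N5} must therefore dispose of the cocore loops by a different mechanism --- note that the statement explicitly permits adding handles of the form $h(\sigma_i,\sigma_j^\epsilon)$, which plays no essential role in your argument but is presumably there precisely so that mixed handles can be cancelled against auxiliary ones without recourse to black vertices. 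As written, your outline proves the theorem only under the extra hypothesis $b(\Gamma)>0$, i.e.\ it reproves the combination of Theorems~\ref{thm1-2} and~\ref{thm1-8} rather than Theorem~\ref{thm1-4} itself.

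Even in the branched case, two mechanisms are misattributed. A CII- or CIII-move does not ``cut a closed cocore loop into an arc whose endpoints are two black vertices''; the cut is a CI-M2 move between the loop and a free edge of the \emph{same} label (Lemma~\ref{lem2-3}), after which one black vertex is dragged around the loop, CII-moves being needed only to carry it across the transverse core loop of label $i$ with $|i-j|>1$. Likewise, one cannot ``release a free edge of the required label'' from the core loop of an auxiliary $h(\sigma_{j_l},e)$ by a CIII-move: the label of a free edge is changed only through exchanges such as $f_i+h(\sigma_{i+1},e)\sim f_{i+1}+h(\sigma_i,e)$ of Lemma~\ref{lem3-3}, which simultaneously alters the label of the handle and moves labels only by adjacent steps, so the availability of a free edge of label $j_l$ is a global bookkeeping issue (exactly what the proof of Theorem~\ref{thm1-8} tracks via $f(\Gamma)$ and $h(F)$), not something produced locally on demand.
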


\begin{definition}\label{def1-9}
We call $(F, \Gamma)$ in the form (\ref{eq1-1}) (respectively, (\ref{eq1-3})) a branched covering surface-knot in a {\it weak simplified form} (respectively, {\it simplified form}), and 
we call the minimal number of 1-handles necessary to deform $(F, \Gamma)$ to the form (\ref{eq1-1}) (respectively, (\ref{eq1-3})) the {\it weak simplifying number} (respectively,  {\it simplifying number}) of $(F, \Gamma)$, denoted by $u_w(F, \Gamma)$ (respectively, $u(F, \Gamma)$).  
\end{definition}

Our results are as follow. Let $(F, \Gamma)$ be a branched covering surface-knot of degree $N$. We denote by $b(\Gamma)$ and $w(\Gamma)$ the numbers of black vertices and white vertices, respectively. 
We consider the case when $b(\Gamma)>0$. 

\begin{theorem}\label{thm4-4}
Let $(F, \Gamma)$ be a branched covering surface-knot of degree $N$ with $b(\Gamma)>0$ and $w(\Gamma)=0$. Then $u_w(F, \Gamma) \leq N-2$ and $u(F, \Gamma) \leq N-2$.
\end{theorem}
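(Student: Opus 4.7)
The plan is as follows. For the base case $N=2$, the only label is $1$; since chart crossings require distinct labels, $\Gamma$ has no crossings, and with $w(\Gamma)=0$ it consists of black vertices, arcs of label $1$ between black vertices, and possibly closed edges of label $1$. Any such closed edge can be absorbed into the edge issuing from a black vertex (which exists since $b(\Gamma)>0$) using a chart move involving a black vertex, leaving only arcs between black vertices, i.e.\ free edges. Hence $u_w(F,\Gamma)=u(F,\Gamma)=0=N-2$. Assume henceforth that $N\ge 3$.

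The first step for $N\ge 3$ is to observe that $w(\Gamma)=0$ forces every crossing of $\Gamma$ to sit between edges with labels $i,j$ satisfying $|i-j|\ge 2$, so all crossings can be swapped and removed by pure CII-moves without introducing white vertices. After this disentangling, the label-$\ell$ subchart of $\Gamma$ can be pushed into a disjoint disk $D_\ell\subset F$ for each $\ell\in\{1,\dots,N-1\}$; each $D_\ell$ then carries a finite disjoint union of arcs of label $\ell$ between label-$\ell$ black vertices, together with finitely many disjoint closed label-$\ell$ loops.

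Next, I would fix a label $i_0$ for which a black vertex exists (possible because $b(\Gamma)>0$). The key local claim, to be established using the CII- and CIII-moves involving black vertices developed earlier in the paper, is that inside $D_{i_0}$ every closed label-$i_0$ loop can be absorbed into the edge issuing from a label-$i_0$ black vertex without any 1-handle; the arcs inside $D_{i_0}$ then straighten to free edges. For each of the remaining $N-2$ labels $\ell\neq i_0$, I would add one 1-handle of type $h(\sigma_\ell,e)$ attached inside $D_\ell$ and push all closed label-$\ell$ loops onto its chart loop by standard chart moves, then straighten the arcs inside $D_\ell$ to free edges. The total 1-handle count is exactly $N-2$, and since none of them are of the form $h(\sigma_i,\sigma_j^{\epsilon})$, both bounds $u_w(F,\Gamma)\le N-2$ and $u(F,\Gamma)\le N-2$ follow simultaneously.

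The main obstacle will be establishing the absorption lemma for label $i_0$ cleanly: it must be carried out without creating white vertices and without disturbing the other $D_\ell$, so that the final chart is a disjoint union of free edges on $F$ plus $N-2$ 1-handles with chart loops. This is precisely the setting in which the generalized CII- and CIII-moves involving black vertices advertised in the introduction are designed to operate, and I expect the argument to mirror the elementary deformation of Kamada's charts alluded to there, with the extra bookkeeping of labels ensuring the sharp $N-2$ count rather than $N-1$.
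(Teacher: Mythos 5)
Your overall architecture (pick one label $i_0$ carried by a black vertex, spend one 1-handle $h(\sigma_\ell,e)$ for each of the other $N-2$ labels, and note that no handle of type $h(\sigma_i,\sigma_j^{\epsilon})$ is ever used, so both bounds follow at once) matches the paper's, and your $N=2$ base case is consistent with it. The gap is in your first step for $N\ge 3$: the claim that all crossings can be removed by CII-moves and that the label-$\ell$ subchart can then be pushed into a disjoint disk $D_\ell$. Under Assumption 1, $F$ is an unknotted surface of arbitrary genus, and a chart with $w(\Gamma)=0$ may contain essential chart loops: a loop of label $1$ along a meridian and a loop of label $3$ along a longitude of a handle of $F$ meet in a crossing with nonzero algebraic intersection number, and no sequence of CI-/CII-moves will separate them. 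So the disks $D_\ell$ need not exist, and your subsequent absorption of label-$\ell$ loops onto the chart loop of $h(\sigma_\ell,e)$ is also blocked, because the paper's Lemma \ref{lem2-3}(2) (absorption by a 1-handle) is only stated for loops \emph{without} crossings.

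The paper sidesteps this entirely by reversing the order of operations: it first adds the $N-2$ handles $\sum_{j\ne i}h(\sigma_j,e)$ next to a chosen free edge $f$, uses Lemma \ref{lem3-3} to give $f$ any desired label, and then invokes Lemma \ref{lem2-3}(1), which explicitly allows the loop being eliminated to carry crossings --- one cuts the loop open against $f$ with a CI-M2 move and drags a black vertex all the way around it, passing through the transverse edges by CII-moves. Every loop, essential or not, crossed or not, is killed one at a time by the free edge rather than by isotopy or by the 1-handles. To repair your argument you should replace the disentangling-and-separation step with this free-edge elimination; the handle count and the conclusion are then exactly as you state.
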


Theorem \ref{thm4-4} gives a better estimate than that given in \cite[Corollary 1.10 (1.12)]{N6}, and the proof is much simpler owing to black vertices. 

By the proof of \cite[Theorem 1.7]{N6}, by an addition of $\lfloor w(\Gamma)/2+b(\Gamma)(N-2)/4\rfloor$ 1-handles, $(F, \Gamma)$ is deformed to have no white vertices.  
Thus we have the following corollary. 

\begin{corollary}\label{thm1-7}
Let $(F, \Gamma)$ be a branched covering surface-knot of degree $N$. 
Let  $b(\Gamma)$ and $w(\Gamma)$ be the numbers of black vertices and white vertices, respectively. 
If $b(\Gamma)>0$, then 
\begin{equation}\label{eq2}
u_w(F, \Gamma) \leq \left\lfloor \frac{w(\Gamma)}{2}+\frac{b(\Gamma)}{4}(N-2) \right\rfloor+N-2.
\end{equation}
where $\lfloor x\rfloor$ is the largest integer less than or equal to $x$. 
Further, $(\ref{eq2})$ also holds true for $u(F, \Gamma)$. 
\end{corollary}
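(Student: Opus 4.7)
The plan is to concatenate the 1-handle additions supplied by the proof of \cite[Theorem 1.7]{N6} with those supplied by Theorem \ref{thm4-4}. First, I would invoke the construction used in the proof of \cite[Theorem 1.7]{N6}: adding $\lfloor w(\Gamma)/2+b(\Gamma)(N-2)/4\rfloor$ 1-handles of the allowed types $h(\sigma_i,e)$, $h(\sigma_i,\sigma_j^\epsilon)$ or $h(e,e)$ deforms $(F,\Gamma)$ to a branched covering surface-knot $(F',\Gamma')$ with $w(\Gamma')=0$. The point that must be extracted from the argument of \cite{N6} rather than its statement is that the chart moves used there (absorptions of chart parts into chart loops on the added 1-handles, together with CII- and CIII-moves) act only on neighborhoods of white vertices and therefore do not destroy any black vertex; hence $b(\Gamma')\geq b(\Gamma)>0$.

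With $w(\Gamma')=0$ and $b(\Gamma')>0$, Theorem \ref{thm4-4} now applies to $(F',\Gamma')$ and supplies at most $N-2$ further 1-handles of the same allowed types which bring $(F',\Gamma')$ both to weak simplified form and to simplified form. Summing the two contributions gives
\[
u_w(F,\Gamma),\ u(F,\Gamma)\ \leq\ \left\lfloor \frac{w(\Gamma)}{2}+\frac{b(\Gamma)}{4}(N-2)\right\rfloor + (N-2),
\]
which is precisely the inequality (\ref{eq2}) for both invariants.

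The only substantive verification I anticipate is the persistence of black vertices through the first stage. I would handle this by a direct inspection of the moves used in the proof of \cite[Theorem 1.7]{N6}, observing that each such move modifies only edges incident to white vertices and leaves every edge endpoint that is a black vertex untouched. In the degenerate alternative that the first stage should happen to leave the chart with no black vertices, the resulting $(F',\Gamma')$ is already in simplified form and no further 1-handles are needed, so the bound holds a fortiori. Either way (\ref{eq2}) follows, completing the proof.
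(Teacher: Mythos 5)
Your proposal is correct and follows essentially the same route as the paper: first apply the construction from the proof of \cite[Theorem 1.7]{N6} to remove all white vertices with $\lfloor w(\Gamma)/2+b(\Gamma)(N-2)/4\rfloor$ 1-handles, then apply Theorem \ref{thm4-4} for the remaining $N-2$. Your worry about black vertices disappearing is easily dispatched (none of the C-moves changes the number of degree-one vertices, and the added 1-handles carry none), so the degenerate case you hedge against cannot occur.
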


Corollary \ref{thm1-7} gives a better estimate than that given in \cite[Corollary 1.10 (1.13)]{N6}. 

By the proof of \cite[Proposition 1.11]{N5}, by an addition of $w(\Gamma)+2c(\Gamma)$ 1-handles, $(F, \Gamma)$ deforms to have no white vertices, where $c(\Gamma)$ is the number of crossings. Thus Theorem \ref{thm4-4} implies $u_{(w)}(F, \Gamma) \leq w(\Gamma)+2c(\Gamma)+N-2$.
Here, we give a better estimate. 

\begin{theorem}\label{thm1-16}
Let $(F, \Gamma)$ be a branched covering surface-knot of degree $N$. 
Let  $b(\Gamma)$ and $w(\Gamma)$ be the numbers of black vertices and white vertices, respectively. 
If $b(\Gamma)>0$, then 
\begin{equation}\label{eq1}
u_w(F, \Gamma) \leq w(\Gamma)+N-2.
\end{equation}
Further, $(\ref{eq1})$ also holds true for $u(F, \Gamma)$. 
\end{theorem}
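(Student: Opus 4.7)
The plan is to split the bound $w(\Gamma)+N-2$ into two stages mirroring the structure of the statement. In Stage 1, I would add exactly $w(\Gamma)$ 1-handles to remove every white vertex from $\Gamma$ while retaining at least one black vertex, producing a branched covering surface-knot $(F_1,\Gamma_1)$ with $w(\Gamma_1)=0$ and $b(\Gamma_1)>0$. In Stage 2, I would invoke Theorem \ref{thm4-4} on $(F_1,\Gamma_1)$, which contributes at most $N-2$ further 1-handles and delivers the (weak) simplified form required for both $u_w$ and $u$. Summing the two contributions yields $u_w(F,\Gamma)\leq w(\Gamma)+N-2$ and $u(F,\Gamma)\leq w(\Gamma)+N-2$.

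Stage 1 is the heart of the argument and the place where the improvement over the bound $w(\Gamma)+2c(\Gamma)$ of \cite[Proposition 1.11]{N5} is realized. I would proceed by induction on $w(\Gamma)$. Given $w(\Gamma)>0$ and $b(\Gamma)>0$, pick a white vertex $v$ and a black vertex $b_0$. The hypothesis $b(\Gamma)>0$ together with the CIII-moves involving black vertices studied in this paper allows $b_0$ to be transported along edges of $\Gamma$ and past crossings at no cost in 1-handles, and in particular into a small disk neighborhood of $v$. I would then add a single 1-handle $h(\sigma_i,e)$ attached to a 2-disk near $v$, choosing the label $i$ from among the labels appearing at $v$, so that the chart loop on the 1-handle together with $b_0$ cancels $v$ by a CII-move involving the black vertex. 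The resulting chart has one fewer white vertex; after verifying that at least one black vertex persists (either $b_0$ itself, or one obtained from breaking the new chart loop into two black vertices during the move), the inductive hypothesis applies and the process terminates after $w(\Gamma)$ steps.

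The main obstacle is the single-1-handle cancellation in Stage 1. A white vertex has six incident edges whose labels satisfy either a braid or a commutation relation, and one must check, for every local configuration of labels and orientations around $v$, that there is a choice of $i$ and of the attaching disk of $h(\sigma_i,e)$ for which the CII-move involving the incoming black vertex actually removes $v$. A secondary technical point is to ensure that transporting $b_0$ to $v$ can always be arranged without creating new white vertices or exhausting the supply of black vertices, so that the induction hypothesis is preserved. Both issues are purely local chart-theoretic verifications, of the same nature as those used in the proof of Theorem \ref{thm4-4}, so I expect the casework to be tractable; once they are settled, the two stages combine immediately to give the claimed bound.
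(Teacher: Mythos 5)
Your overall accounting ($w(\Gamma)$ handles in a first stage, then $N-2$ more) matches the paper's, but the mechanism you propose for Stage 1 has a genuine gap, and it is precisely the crux of the theorem. First, the move you invoke to cancel a white vertex --- ``a CII-move involving the black vertex'' --- cannot do this: a CII-move only passes a black vertex transversely through an edge of a commuting label ($|i-j|>1$) and never changes the number of white vertices. The move that eliminates a white vertex using a black vertex is the CIII-move, and it requires the black vertex's edge to be \emph{connected to a non-middle arc} of that white vertex with $|i-j|=1$; being in a small disk neighborhood is not sufficient. Second, the transport of $b_0$ ``along edges and past crossings at no cost'' is not available: pushing a free edge or black vertex through a chart edge whose label is not distant leaves it surrounded by a chart loop (this is exactly Fig.~\ref{fig7}/Fig.~\ref{fig16} of the paper), so the transport itself generates new chart data that must be paid for. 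Note also that if your transport-and-cancel step really worked as stated, the CIII-move alone would kill each white vertex with \emph{no} 1-handle at all, which signals that the obstruction you defer to ``tractable casework'' is in fact the whole difficulty: a white vertex has two middle arcs, and an edge ending at a black vertex may well be attached to a middle arc, where no CIII-move applies.

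The paper's Stage 1 is different in kind: it does \emph{not} eliminate white vertices one at a time. It adds one $h(\sigma_i,e)$ per white vertex and slides an end of the handle to \emph{collect} the white vertex onto the handle (Figs.~\ref{fig11}--\ref{fig12}), so that afterwards every middle arc lies on a cocore-parallel loop of a 1-handle. Only then is every edge at a black vertex guaranteed to end at another black vertex or at a non-middle arc, so CIII- and CII-moves turn all black vertices into endpoints of free edges. The white vertices are still present at this point; they are eliminated only after the further $N-2$ handles $\sum_{j\ne i}h(\sigma_j,e)$ make a free edge of arbitrary label available (Lemmas \ref{lem3-3} and \ref{lem2-3}), which is also why Stage 2 cannot simply be a black-box application of Theorem \ref{thm4-4} to a chart with $w=0$. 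To repair your argument you would either need to supply the collecting construction (or an equivalent one) in Stage 1, or prove the missing local claim that one 1-handle plus one black vertex always suffices to cancel an arbitrary white vertex --- which is not established anywhere in the paper and is not plausible as stated.
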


In \cite[Conjecture 1.13 (1.19)]{N6}, we gave the following conjecture. 

\begin{conjecture}\label{conj}
 For a branched covering surface-knot $(F, \Gamma)$ of degree $N$, 
\begin{equation}\label{eq:conj}
u(F,\Gamma) \leq \max\{ u_w(F, \Gamma), N-1\}. 
\end{equation}
\end{conjecture}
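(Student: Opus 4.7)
The plan is to begin from a realization of $u_w(F,\Gamma)$, that is, a deformation of $(F,\Gamma)$ to weak simplified form
$(F, \Gamma_0) + \sum_k h(\sigma_{i_k}, e) + \sum_l h(\sigma_{i_l}, \sigma_{j_l}^{\epsilon_l}) + \sum h(e,e)$
using exactly $u_w(F,\Gamma)$ 1-handles (such a realization exists by Theorem \ref{thm1-2} and the definition of $u_w$), and then to convert each mixed handle $h(\sigma_{i_l}, \sigma_{j_l}^{\epsilon_l})$ into a pure handle $h(\sigma_{i_l}, e)$, aiming to keep the total handle count bounded by $\max\{u_w(F,\Gamma), N-1\}$.

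The key technical ingredient to isolate should be a conversion lemma of roughly the following shape: if the current form contains both a mixed handle $h(\sigma_i, \sigma_j^\epsilon)$ (with $|i-j|>1$) and a second handle whose core chart loop carries the label $j$, such as $h(\sigma_j, e)$, then the cocore loop $\sigma_j^\epsilon$ of the first handle can be slid off along a band connecting the two handles via CII-moves and absorbed into the $\sigma_j$-loop of the second handle, replacing $h(\sigma_i, \sigma_j^\epsilon)$ by $h(\sigma_i, e)$ while leaving the second handle essentially unchanged. If a suitable partner handle is not already present among the $u_w$ handles, one must add an auxiliary $h(\sigma_j, e)$ as a new 1-handle. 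A variant of this lemma may also be needed for pairs of mixed handles sharing labels, so that two mixed handles can be simultaneously collapsed.

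Combining these conversions, the bound $u(F,\Gamma) \leq \max\{u_w(F,\Gamma), N-1\}$ splits naturally into two regimes. When $u_w(F,\Gamma) \geq N-1$, one needs to argue that no auxiliary handles are necessary: the chosen weak simplified form can always be arranged so that every cocore label $j_l$ of a mixed handle already appears on the core of some companion handle, letting each mixed handle collapse in place and keeping the total at $u_w$. When $u_w(F,\Gamma) < N-1$, one discards the given weak simplified form and instead constructs a fresh simplified form with at most $N-1$ handles, for example of the shape $(F, \Gamma_0') + \sum_{i=1}^{N-1} h(\sigma_i, e)$, using Theorem \ref{thm1-4} as the starting point and exploiting the smallness of $u_w$ to eliminate any surplus. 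The main obstacle is precisely this second regime when $b(\Gamma)=0$: then Theorem \ref{thm4-4} and the black-vertex-based CII- and CIII-moves developed in this paper are unavailable, so a proof likely requires new chart-move machinery that allows $\sigma_j^\epsilon$ cocore loops to be slid along chains of existing $\sigma_i$ core loops without the help of black vertices, a step that is not developed in the present work and appears to be the principal difficulty in settling Conjecture \ref{conj}.
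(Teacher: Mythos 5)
There is a genuine gap, and it sits exactly in your key conversion lemma. First, note that the statement you are proving is stated in the paper only as a conjecture: the paper establishes it solely in the case $b(\Gamma)>0$, by deforming $(F,\Gamma)$ to a weak simplified form $(F',\Gamma')$ with $u_w(F,\Gamma)$ handles and then invoking Theorem \ref{thm1-8}, which bounds the extra handles needed to pass to a simplified form by $\max\{0,\,N-f(\Gamma')-h(F')-1\}$; since $h(F')\geq u_w(F,\Gamma)$ and $f(\Gamma')\geq 1$, the total is at most $\max\{u_w(F,\Gamma),\,N-2\}$. Your overall plan (start from a realization of $u_w$, convert each mixed handle $h(\sigma_{i_l},\sigma_{j_l}^{\epsilon_l})$ to a pure one, and count) is structurally the same, but the mechanism you propose for the conversion does not work. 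The cocore loop $\sigma_j^{\epsilon}$ of a 1-handle is homotopically essential on $F+h$, and a CII-move only allows an edge of label $j$ to pass transversely through an edge of label $i$ with $|i-j|>1$; it cannot merge, cancel, or ``absorb'' a meridian loop of one handle into a longitude loop of another handle of the same label. Every elimination of such an essential loop in the paper goes through Lemma \ref{lem2-3}, where a black vertex of a free edge of the same label traverses the loop and consumes it; that is precisely the black-vertex machinery, and it is what the proofs of Lemmas \ref{lem4-1}--\ref{lem4-4} and \ref{lem4-8} use (a free edge changes its label by passing through an arc of an adjacent label, then eliminates the offending chart loop). So even in the regime $b(\Gamma)>0$ your argument needs to be rerouted through free edges rather than through a band of CII-moves between handles.

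Your second regime is also underdeveloped: when $u_w(F,\Gamma)<N-1$ you appeal to Theorem \ref{thm1-4} to build a fresh simplified form, but that theorem gives no bound on the number of handles, and ``exploiting the smallness of $u_w$'' is not an argument. The paper avoids this entirely because the single inequality $u_w+\max\{0,N-f-h-1\}\leq\max\{u_w,N-2\}$ covers both regimes at once. You are right, however, that the case $b(\Gamma)=0$ is the genuine open difficulty: there the free-edge tools are unavailable, no substitute for Lemma \ref{lem2-3} is developed, and the paper does not claim the conjecture in that case. A correct write-up should therefore either restrict to $b(\Gamma)>0$ and rebuild the conversion step on Lemmas \ref{lem2-3}, \ref{lem3-3}, \ref{lem4-1}--\ref{lem4-4}, and \ref{lem4-8}, or supply the new loop-elimination machinery that your CII-sliding step silently assumes.
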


Conjecture \ref{conj} follows for the case $b(\Gamma)>0$ from the following theorem. 

\begin{theorem}\label{thm1-8}
Let $(F, \Gamma)$ be a branched covering surface-knot in a weak simplified form for $u_w(F, \Gamma)$ such that $b(\Gamma)>0$, where $b(\Gamma)$ is the number of black vertices of $\Gamma$. Let $f(\Gamma)=b(\Gamma)/2$, the number of free edges of $\Gamma$, and let $h(F)$ be the number of 1-handles in $(F, \Gamma)$. Then 
\begin{equation}
u(F, \Gamma) \leq \max\{0, N-f(\Gamma)-h(F)-1\}. 
\end{equation}
\end{theorem}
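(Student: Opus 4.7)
The plan is to extend the chart-move argument of Theorem \ref{thm4-4}, which gave $u(F,\Gamma) \le N-2$ using black vertices but no assumed pre-existing structure. In the present setting we are given, for free, $f(\Gamma)$ free edges and $h(F)$ already-added 1-handles, and I would use each of these as a ``reservoir'' for absorbing the cocore loops of the $h(\sigma_i, \sigma_j^\epsilon)$ 1-handles, which is what accounts for the improvement by $f(\Gamma) + h(F) - 1$ over the bound from Theorem \ref{thm4-4}.

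The proof will induct on the number $b$ of 1-handles of type $h(\sigma_i, \sigma_j^\epsilon)$ appearing in the given weak simplified form. If $b = 0$ the form is already simplified and $u(F,\Gamma) = 0$, so the bound is trivial. Otherwise, pick one $h(\sigma_i, \sigma_j^\epsilon)$ and eliminate its cocore loop of label $\sigma_j^\epsilon$ by the same technique as in Theorem \ref{thm4-4}: slide a black vertex of $\Gamma$ along a path in $F$ up to the cocore, and apply a CIII-move to cut the cocore loop at the black vertex and absorb it into the surrounding chart structure. The 1-handle then becomes $h(\sigma_i, e)$, reducing $b$ by one. The source of the sliding black vertex is either (i) an end of an existing free edge of $\Gamma_0$, (ii) a black vertex arising, via chart moves, from some other existing 1-handle, or (iii) a newly added auxiliary 1-handle $h(\sigma_k, e)$ carrying a free edge.

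A global bookkeeping then shows that case (iii) is forced only when no pre-existing free edge or 1-handle can supply a black vertex whose label is compatible (possibly after conjugation by chart moves) with the cocore label $\sigma_j^\epsilon$ being eliminated. Since the labels of $\Gamma_0$'s free edges and of the existing 1-handles together span at most $f(\Gamma) + h(F)$ values in the label set $\{1, \dots, N-1\}$, the count of labels ``missing'' to cover all $b$ cocores is at most $(N-1) - (f(\Gamma) + h(F))$. Hence at most $\max\{0, N - f(\Gamma) - h(F) - 1\}$ new 1-handles are required, which is the claimed bound on $u(F, \Gamma)$.

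The main obstacle is the local chart-move verification underlying the elimination step: one must check that the slide and the CIII-move do convert $h(\sigma_i, \sigma_j^\epsilon)$ into $h(\sigma_i, e)$ without introducing new white vertices or new $h(\sigma, \sigma)$-type 1-handles along the way, and that after absorption the black vertex is returned to its reservoir so that it can be re-used on another 1-handle. The commutation relation $\sigma_i \sigma_j = \sigma_j \sigma_i$ enforced by $|i-j|>1$ is essential here, and the required local computation is a non-trivial generalization of Kamada's elementary deformation mentioned in the introduction. A secondary difficulty is the hypothesis that $(F,\Gamma)$ realizes $u_w(F,\Gamma)$: this minimality likely enters precisely to ensure that the pre-existing labels are ``spread out'' enough for the counting argument to yield the claimed bound rather than a weaker one.
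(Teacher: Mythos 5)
Your high-level plan --- treat the free edges and the pre-existing 1-handles as resources, and pay one new 1-handle only for each ``label deficit'' --- is the right shape, and your arithmetic $(N-1)-(f(\Gamma)+h(F))$ matches the paper's bound. But the proposal has genuine gaps at exactly the points where the work lies. First, the elimination mechanism is not correct as described: a cocore loop of label $j$ is not removed by sliding a black vertex to it and applying a CIII-move. A CIII-move requires a white vertex and adjacent labels $|i-j|=1$, and it changes the label of the edge at the black vertex; it does not cut a loop. What actually kills a loop of label $j$ is a free edge of the \emph{same} label $j$, via a CI-M2 move followed by retracting the resulting long edge from one black vertex (the paper's Lemma \ref{lem2-3}). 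The entire difficulty --- and the content of the paper's Lemmas \ref{lem4-1}--\ref{lem4-4} and \ref{lem4-8} --- is the calculus for changing the label of a free edge to the one you need (e.g.\ $f_i+h(\sigma_{j},\sigma_k^\epsilon)\sim f_{j}+h(e,\sigma_i\sigma_k^\epsilon)$, $f_i+f_i+h(\sigma_j,e)\sim f_i+f_j+h(e,e)$) and for transferring a surplus loop onto a loop-free handle ($f_i+h(\sigma_k,\sigma_j^\epsilon)+h(e,e)\sim f_j+h(\sigma_k,e)+h(\sigma_i,e)$). You explicitly defer this as ``the main obstacle,'' so the core of the proof is missing.

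Second, your case (ii) --- a black vertex ``arising, via chart moves, from some other existing 1-handle'' --- cannot occur: C-moves preserve the number of black vertices, and the 1-handles $h(\sigma_i,e)$, $h(\sigma_i,\sigma_j^\epsilon)$, $h(e,e)$ carry none. Existing 1-handles contribute in two different ways, neither of which is ``supplying a black vertex'': an $h(e,e)$ (or any handle that has become loop-free) can absorb one surplus cocore loop and become a new $h(\sigma,e)$, and an $h(\sigma_i,e)$ can be used to relabel a free edge. Consequently your bookkeeping, which lumps free-edge labels and handle labels into a single set of ``$f(\Gamma)+h(F)$ covered values,'' is not justified; the correct count comes after first normalizing so that the surviving loop labels are mutually distinct and disjoint from the free-edge labels, whence at most $N-1-f(\Gamma)$ loops are distributed over $h(F)$ handles and at most $N-1-f(\Gamma)-h(F)$ of them are ``second loops'' with no spare handle to receive them. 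Finally, a small point: the hypothesis that the weak simplified form realizes $u_w(F,\Gamma)$ is not what makes the counting work --- the paper's argument never uses minimality; it is only used afterwards to deduce Conjecture \ref{conj} from the theorem.
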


Let $(F, \Gamma)$ be a branched covering surface-knot of degree $N$ with $b(\Gamma)>0$. By an addition of $u_w(F, \Gamma)$ number of 1-handles, $(F, \Gamma)$ deforms to be in a weak simplified form $(F', \Gamma')$. For the number $h(F')$ of 1-handles in $(F', \Gamma')$, we have $u_w(F, \Gamma) \leq h(F')$. 
Since $u_w(F, \Gamma)+N-f(\Gamma')-h(F')-1 \leq N-f(\Gamma')-1<N-1$, Theorem \ref{thm1-8} implies Conjecture \ref{conj}. 
   
The paper is organized as follows. In Section \ref{sec2}, we review branched covering surface-knots and their chart presentations. 
In Section \ref{sec3}, we show Theorems \ref{thm4-4} and \ref{thm1-16}. In Section \ref{sec4}, we show Theorem \ref{thm1-8}.

\section{Branched covering surface-knots (formerly 2-dimensional braids) and their chart presentations}\label{sec2}
In this section, we review a branched covering surface-knot, formerly a 2-dimensional braid over a surface-knot \cite{N4} (see also \cite{N}). We adopted the term \lq\lq branched covering surface-knot'' in \cite{N6}.  A branched covering surface-knot is an extended notion of 2-dimensional braids or surface braids over a 2-disk \cite{Kamada92, Kamada02, Rudolph}. A branched covering surface-knot over a surface-knot $F$ is presented by a finite graph called a chart on a surface diagram of $F$ \cite{N4} (see also \cite{Kamada92, Kamada96, Kamada02}). For two branched covering surface-knots of the same degree, they are equivalent if their surface diagrams with charts are related by a finite sequence of ambient isotopies of $\mathbb{R}^3$, and local moves called C-moves \cite{Kamada92, Kamada02} and Roseman moves \cite{N4} (see also \cite{Roseman}). Here, we only review C-moves.  
 
\subsection{Branched covering surface-knots over a surface-knot}
 
Let $B^2$ be a 2-disk, and let $N$ be a positive integer. 
For a surface-knot $F$, let $N(F)=B^2 \times F$ be a tubular neighborhood of $F$ in $\mathbb{R}^4$. 

\begin{definition}
A closed surface $S$ embedded in $N(F)$ is called a {\it branched covering surface-knot over $F$ of degree $N$} if it satisfies the following conditions. 

\begin{enumerate}[(1)]
\item
The restriction $p|_{S} \,:\, S \rightarrow F$ is a branched covering map of degree $N$, where $p\,:\, N(F) \to F$ is the natural projection. 

\item The number of points consisting $S \cap p^{-1}(x)$ is $N$ or $N-1$ for any point $x \in F$.
\end{enumerate}
Take a base point $x_0$ of $F$. 
Two branched covering surface-knots over $F$ of degree $N$ are {\it equivalent} if there is an ambient isotopy of $\mathbb{R}^4$ whose restriction to $N(F)=B^2 \times F$ is a fiber-preserving ambient isotopy relative to $p^{-1}(x_0)$ which takes one to the other. 

\end{definition}
\subsection{Chart presentation}\label{sec2-2}

A {\it surface diagram} of a surface-knot $F$ is the image of $F$ in $\mathbb{R}^3$ by a generic projection, equipped with the over/under information on sheets along each double point curve. 
 
\begin{definition} 
Let $N$ be a positive integer. 
A finite graph $\Gamma$ on a surface diagram $D$ is called a {\it chart} of degree $N$ if 
it satisfies 
the following conditions.

\begin{enumerate}[(1)]
\item
The intersection of $\Gamma$ and the singularity set of $D$ consists of a finite number of transverse intersection points of edges of $\Gamma$ and double point curves of $D$, which form vertices of degree $2$.

 \item Every vertex has degree $1$, $2$, $4$, or $6$.
 
 \item Every edge of $\Gamma$ is oriented and labeled by an element of 
       $\{1,2, \ldots, N-1\}$. Around vertices of degree $1$, $4$, or $6$, the edges 
are oriented and labeled as shown in 
Fig. \ref{fig2}. 
We depict a vertex of degree 1 by a black vertex, and a vertex of degree 
6 by a white vertex, and we call a vertex of degree $4$ a crossing.
 \end{enumerate}
\end{definition}

\begin{figure}[ht] 
\includegraphics*{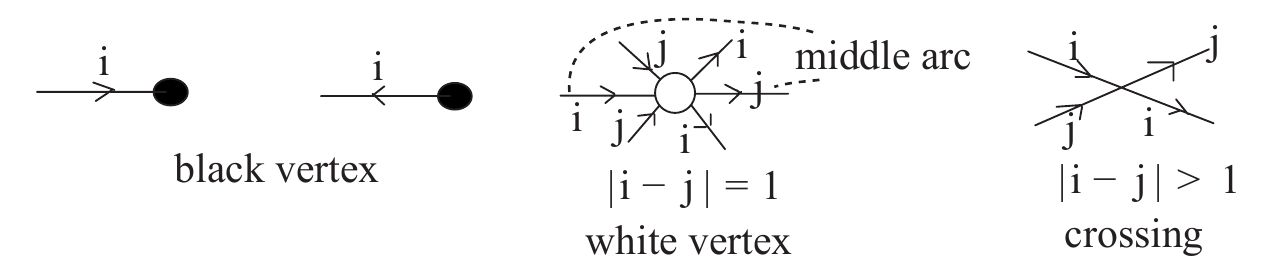}
\caption{Vertices in a chart, where $i \in \{1,\ldots,N-1\}$.}
\label{fig2}
\end{figure}

Remark that since we consider $F$ as an unknotted surface-knot in the standard form, its surface diagram contains no singularities and vertices of degree 2 do not appear. For the definition of vertices of degree 2, see \cite{N4}.  

A branched covering surface-knot over a surface-knot $F$ is presented by a chart $\Gamma$ on a surface diagram of $F$ \cite{N4}. We present such a branched covering surface-knot by $(F, \Gamma)$. 
 \\

We call an edge of a chart a {\it chart edge} or simply an {\it edge}. 
We regard diagonal edges connected with crossings as one edge, and we regard that a crossing is formed by intersections of two edges. 
A chart edge connected with no vertices or a chart edge with crossings 
is called a {\it chart loop} or simply a {\it loop}. A chart edge whose endpoints are black vertices is called a {\it free edge}. A chart is said to be {\it empty} if it is an empty graph. 

In order to distinguish parts of an edge connected with two vertices at the end points, we use the notion of an arc. 
For a vertex $v$, 
we call the intersection of an edge connected with $v$ and a small neighborhood of $v$ an {\it arc}. 
For a white vertex, we call an arc which is the middle of the three adjacent arcs with the coherent orientation a {\it middle arc}, and we call an arc which is not a middle arc a {\it non-middle arc}. Around a white vertex, there are two middle arcs and four non-middle arcs; see Fig. \ref{fig2}. 
\\

We explain the correspondence between a branched covering surface-knot and the chart presentation. 
Let $S$ be a branched covering surface-knot over a surface-knot $F$. 
We explain how to obtain a chart on a 2-disk $B$ in a surface diagram which does not intersect with singularities of $F$. 
We denote the covering surface $S\cap p^{-1}(B)$ by the same notation $S$. We identify a tubular neighborhood $N(B)$ by $I \times I \times B$. Consider the singularity set $\mathrm{Sing}(p_1(S))$ of the image of $S$ by the projection $p_1$ to $I \times B$. Perturbing $S$ if necessary, we assume that $\mathrm{Sing}(p_1(S))$ consists of double point curves, triple points, and branch points. Further, we assume that the singular set of the image of $\mathrm{Sing}(p_1(S))$ by the projection to $B$ consists of a finite number of double points such that the preimages belong to double point curves of $\mathrm{Sing}(p_1(S))$. Thus 
the image of $\mathrm{Sing}(p_1(S))$ by the projection to $B$ forms a finite graph $\Gamma$ on $B$ such that the degree of a vertex of $\Gamma$ is either $1$, $4$ or $6$, where we ignore the points in $\partial B$. An edge of $\Gamma$ corresponds 
to a double point curve, and a vertex of degree $1$ (respectively, $6$) 
corresponds to a branch point (respectively, triple point). 

For such a graph $\Gamma$ obtained from a covering surface $S$, we assign orientations and labels to all edges of $\Gamma$ as follows. Take a path $\rho$ in $B$ such that $\rho \cap \Gamma$ is a point $x$ of an edge $E$ of $\Gamma$. Then $S \cap p^{-1} (\rho)$ is a classical $N$-braid with one crossing in $p^{-1}(\rho)$ such that $x$ corresponds to the crossing of the $N$-braid, where $N$ is the degree of $S$. Let $\sigma_{i}^{\epsilon}$ ($i \in \{1,2,\ldots, N-1\}$, 
$\epsilon \in \{+1, -1\}$) be the presentation of $S \cap p^{-1}(\rho)$. We assign $E$ the label $i$, and the orientation such that 
the normal vector of $\rho$ is coherent  (respectively, incoherent) with the orientation of $E$ if $\epsilon=+1$ (respectively, $-1$), where the normal vector of $\rho$ is the vector $\vec{n}$ such that $(\vec{v}(\rho), \vec{n})$ corresponds to the orientation of $B$ for a tangent vector $\vec{v}(\rho)$ of $\rho$ at $x$. This is the chart presentation of $S\cap p^{-1}(B)$. 

\subsection{C-moves}
  
{\it C-moves} ({\it chart moves}) are local moves of a chart, consisting of three types called CI-moves, CII-moves, and CIII-moves. 
Let $\Gamma$ and $\Gamma^{\prime}$ be two charts of the same degree on a surface diagram $D$. We say $\Gamma$ and $\Gamma'$ are related by a {\it CI-}, {\it CII-} or {\it CIII-move} if there exists a 2-disk $B$ in $D$ such that $B$ does not intersect with the singularities of $D$, and the loop $\partial B$ is in general position with respect to $\Gamma$ and $\Gamma^{\prime}$ and $\Gamma \cap (D-B)=\Gamma^{\prime} \cap (D-B)$, and the following conditions hold true.  
 
(CI) There are no black vertices in $\Gamma \cap B$ nor $\Gamma^{\prime} \cap B$. The moves given in Fig. \ref{fig3} are called CI-M1, CI-M2, CI-M3 moves, respectively. 
See \cite{CKS} for figures of a complete generating set of CI-moves. 

(CII) $\Gamma \cap B$ and $\Gamma' \cap B$ are as in Fig. \ref{fig3}, where $|i-j|>1$.

(CIII) $\Gamma \cap B$ and $\Gamma' \cap B$ are as in Fig. \ref{fig3}, where $|i-j|=1$, and the black vertex is connected to a non-middle arc of a white vertex. 
\\

\begin{figure}[ht]
\includegraphics*[width=13cm]{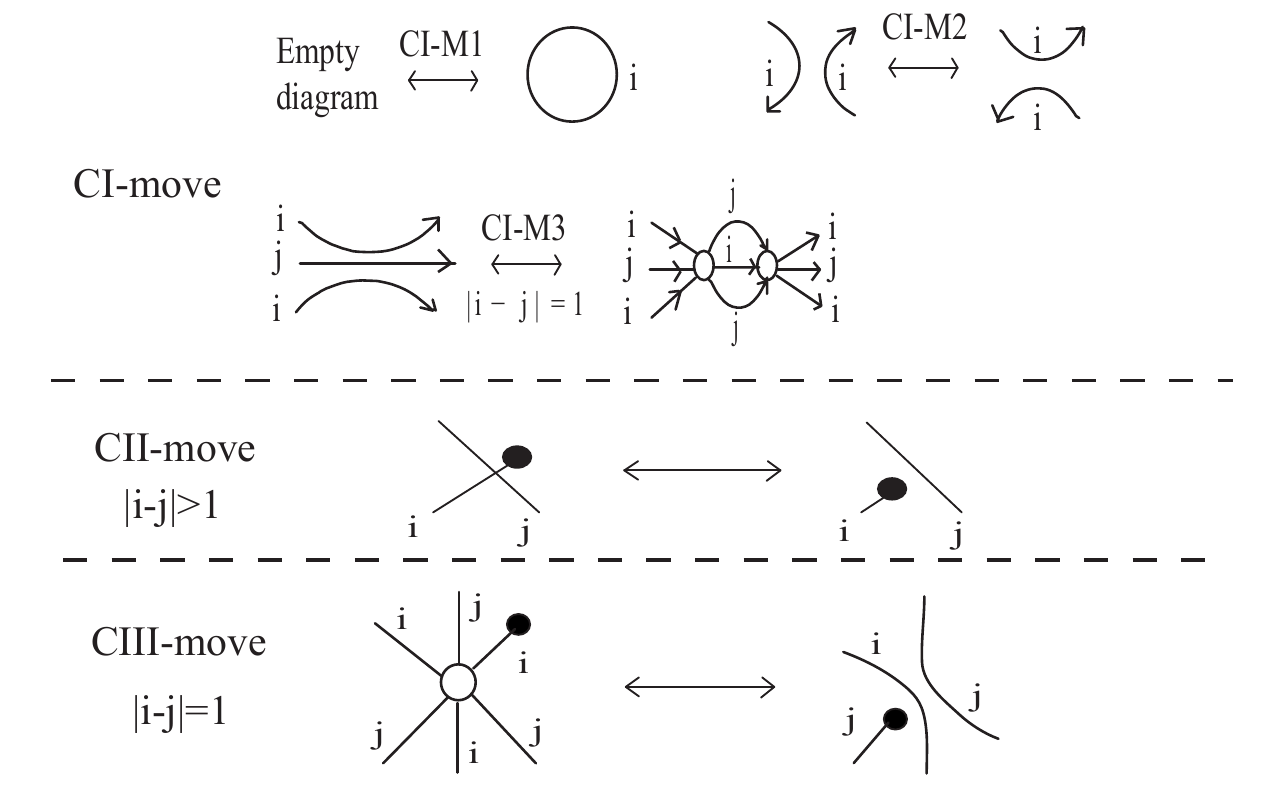}
\caption{Examples of C-moves. For simplicity, we omit orientations of some edges.} 
\label{fig3}
\end{figure}

For charts $\Gamma$ and $\Gamma'$ of the same degree on a surface diagram of a surface-knot $F$, 
their presenting branched covering surface-knots are equivalent if the charts are related by a finite sequence of C-moves \cite{Kamada92, Kamada02}.

\section{Simplifying branched covering surface-knots with black vertices}\label{sec3}

We say that 1-handles with chart loops attached to a 2-disk in a 3-ball $B^3$ are {\it equivalent} if one is carried to the other by an ambient isotopy of $B^3$ and C-moves. Branched covering surface-knots with equivalent 1-handles are equivalent. We use the notation \lq\lq $\sim$'' to denote the equivalence relation. For commutative braids $a, b$, we denote by $h(a,b)$  a 1-handle equipped with a chart without black vertices such that the cocore and the orientation-reversed core loop  presents $a$ and $b$, respectively. 
Unless otherwise said, we assume that 1-handles are attached to a fixed 2-disk such that there are no chart edges nor vertices except those of 1-handles. We denote by $f_i$ a free edge with the label $i$. We denote by the notation \lq\lq $+f_i$'' the branched covering surface-knot obtained from the \lq\lq addition'' of $f_i$, that is, by adding $f_i$ into a 2-disk which has no chart edges nor vertices. 
Let $N$ be the degree of charts.

\begin{lemma}\label{lem3-1}
We have 
\begin{eqnarray} 
&& h(\sigma_i, e) \sim h(\sigma_i^{-1}, e), \\
&& h(e, \sigma_i) \sim h(e, \sigma_i^{-1}), \\
&& h(\sigma_i, \sigma_j^\epsilon) \sim h(\sigma_i^{-1}, \sigma_j^{-\epsilon}),
\end{eqnarray}
where $|i-j|>1$, $i,j \in \{1, \ldots, N-1\}$, and $\epsilon \in \{+1, -1\}$. 
\end{lemma}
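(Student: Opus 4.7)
The plan is to realize each of the three equivalences by an ambient isotopy of a 3-ball $B^3$ containing the 1-handle, exploiting the triviality of 1-handles from Assumption~\ref{assump1} together with the standing convention that framings of 1-handles are not distinguished. The core and cocore of $h$ are determined by a parametrization $B^2\times I\to h$, and re-parametrizing $h$ by reversing $I$ or by reflecting $B^2$ reverses the orientation of the core loop or the cocore respectively; each such re-parametrization can be realized inside $B^3$ by an ambient isotopy that carries $h$ set-wise to itself, which by definition gives an equivalence of the associated 1-handles with chart loops.

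For the first equivalence I would take a $180^\circ$ rotation of $B^3$ about the axis perpendicular to $F\cap B^3$ through the midpoint of the arc in $F$ joining the two ends; after arranging $B^3$ and $h$ to be symmetric under this rotation, it interchanges the two ends, reverses the core loop, and preserves the cocore (which rotates within its own plane and so keeps its orientation). The chart loop of label $i$ parallel to the core with coherent orientation is carried to one with incoherent orientation, and so now presents $\sigma_i^{-1}$, giving $h(\sigma_i,e)\sim h(\sigma_i^{-1},e)$. For the second equivalence I would instead use an isotopy that reflects the $B^2$ factor while fixing each end, realized in $\mathbb{R}^4$ via a rotation using the coordinate normal to $\mathbb{R}^3\times\{0\}\supset F$; alternatively, reversing the cocore is precisely the framing change depicted in Figure~\ref{fig1}, so the equivalence is immediate from our framing convention. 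The third equivalence follows by composing the previous two isotopies, and the hypothesis $|i-j|>1$ is used to guarantee that the two chart loops on $h$ commute and remain disjoint throughout, so that no CII- or CIII-moves are needed to disentangle them.

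The main obstacle I foresee is the second equivalence, since reversing the cocore cannot be achieved by an orientation-preserving ambient isotopy inside $\mathbb{R}^3$ alone; the fourth dimension of $\mathbb{R}^4$, or equivalently the framing indeterminacy granted by the convention at the paragraph before Figure~\ref{fig1}, is essential here. Once this is settled, verifying the effect on each chart loop is a routine tracking of whether its orientation is coherent or incoherent with the newly parametrized core or cocore.
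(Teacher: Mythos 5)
There is a genuine gap, and it sits in the very first paragraph of your plan: the claim that reversing the $I$ factor alone, or reflecting the $B^2$ factor alone, ``can be realized inside $B^3$ by an ambient isotopy that carries $h$ set-wise to itself'' is false. Each of these two re-parametrizations reverses the orientation of the solid handle $B^2\times I$, while every ambient isotopy of $B^3\subset\mathbb{R}^3$ preserves it; only their \emph{composite} is realizable, and that composite is exactly the $180^\circ$ rotation. This is why your analysis of the rotation is off: the rotation does \emph{not} preserve the cocore. Concretely, the two meridians $\partial B^2\times\{0\}$ and $\partial B^2\times\{1\}$, given the consistent product orientation, appear with \emph{opposite} planar orientations as circles in $F$ (the tube faces opposite ways at its two feet); since the rotation preserves planar orientation, it carries the cocore to the meridian at the other end with the product orientation \emph{reversed}. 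Hence the single rotation inverts both entries simultaneously, $h(a,b)\sim h(a^{-1},b^{-1})$, which is precisely the paper's proof (Fig.~\ref{fig4}) and which already yields all three relations of Lemma~\ref{lem3-1} because $e=e^{-1}$. Your decomposition into two independent inversions is therefore both unnecessary and, as carried out, incorrect in its bookkeeping.

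Your proposed second step does not repair this. The $\mathbb{R}^4$ rotation realizing a reflection of the $B^2$ factor (a $180^\circ$ turn in a plane spanned by a $B^2$-direction and the fourth coordinate) reverses the orientation of $F$, so it produces the mirror chart rather than an equivalence of branched covering surface-knots; and the framing convention of Fig.~\ref{fig1} concerns the twisting of the handle as a submanifold, not a license to reverse the orientation of the cocore used to read the chart. Note also that if reversing the cocore alone were an allowed move, you would prove the strictly stronger relation $h(\sigma_i,\sigma_j^{\epsilon})\sim h(\sigma_i,\sigma_j^{-\epsilon})$, which the paper pointedly does not claim and which would make the careful tracking of the signs $\epsilon_l$ in Theorem~\ref{thm1-2} and in the lemmas of Section~\ref{sec4} superfluous. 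The fix is simple: drop the second step, perform only the rotation, and track that it reverses the core loop and the cocore together.
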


\begin{proof}
Rotating the 1-handle and taking the orientation reversal of the original core loop as the new core loop, we have the result. See Fig. \ref{fig4}, and see also \cite[Lemma 4.1]{N5}. 
\end{proof}

\begin{figure}[ht]
\centering
\includegraphics*[height=2.5cm]{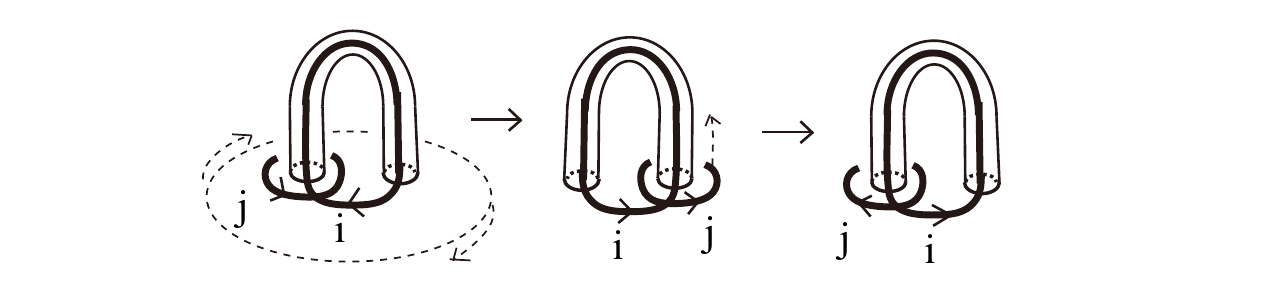}
\caption{$h(\sigma_i, \sigma_j) \sim h(\sigma_i^{-1},\sigma_j^{-1})$, where $|i-j|>1$. }
\label{fig4}
\end{figure}

\begin{lemma}\label{lem2-3}
$(1)$ Let $\rho$ be a chart loop of the label $i$ such that there may be crossings on $\rho$. If we have a free edge $f_i$ of the label $i$,  then we can eliminate $\rho$ using $f_i$. Conversely, if we have a free edge $f_i$ of the label $i$, then we can add a chart loop $\rho$ of the label $i$ such that an arc of $\rho$ is in a neighborhood of $f_i$.

$(2)$ A similar result as $(1)$ holds true for a chart loop $\rho$ with no crossings, and when we take a 1-handle with a chart loop $h(\sigma_i, e)$ instead of $f_i$. 
\end{lemma}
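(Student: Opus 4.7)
The argument rests on a single local CI-move: the reconnection of two oppositely oriented parallel arcs of the same label inside an empty disk, which exchanges them with two U-shaped arcs. This move is either one of CI-M1, CI-M2, CI-M3 shown in Fig.~\ref{fig3} or a finite composition of them; it is standard in Kamada's chart calculus. For Part~(1), direct direction: since $\rho$ has only finitely many crossings, it contains an arc $\alpha$ free of crossings. Using isotopy and CII-moves (which allow a label-$i$ edge and its adjacent black vertex to slide across any chart edge of label $j$ with $|i-j|>1$), I move $f_i$ into a disk $B$ adjacent to $\alpha$ and containing no other chart elements, routed so that its edge is parallel and oppositely oriented to $\alpha$. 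Applying the reconnection CI-move inside $B$, the two black vertices of $f_i$ become the endpoints of a single new edge that traces the former path of $\rho$ (carrying whatever crossings $\rho$ had). This new edge is therefore a free edge $f_i'$, and $\rho$ has been eliminated as a chart loop.

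For the converse direction, apply the same CI-move in reverse inside an empty disk $B$ intersecting a segment of $f_i$; the inverse reconnection produces a pair of U-shaped arcs that, together with the rest of $f_i$, give back $f_i$ (unchanged outside $B$) together with a small chart loop $\rho$ of label $i$ having an arc in a neighborhood of $f_i$. Part~(2) follows by the identical strategy with the chart loop $\gamma$ on $h(\sigma_i, e)$ in place of $f_i$'s edge: Lemma~\ref{lem3-1} (namely $h(\sigma_i,e) \sim h(\sigma_i^{-1}, e)$) lets us reverse $\gamma$'s orientation to make it antiparallel to the adjacent arc of $\rho$, and the reconnection CI-move then fuses $\rho$ and $\gamma$ into a single chart loop, eliminating $\rho$; conversely the inverse move creates a small $\rho$ adjacent to $\gamma$.

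The main technical point is to recognize the reconnection of two oppositely oriented label-$i$ arcs as a permissible CI-move in the sense of Section~\ref{sec2}. Once this identification is granted, the entire proof is a direct visual tracing of chart connectivity in the disk $B$, in the same spirit as the proof of Lemma~\ref{lem3-1}.
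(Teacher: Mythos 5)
Your overall strategy is the same as the paper's: the elimination is effected by a CI-M2 reconnection between $f_i$ (or the chart loop of $h(\sigma_i,e)$) and an arc of $\rho$, with CII-moves used to pass label-$i$ pieces across edges whose label differs by more than one, and the converse direction is the inverse CI-M2 move. However, you stop one step short, and for a loop with crossings this leaves a genuine gap. After the reconnection, your ``new free edge $f_i'$'' still traces the entire former course of $\rho$ and, as you yourself note, still carries all of $\rho$'s crossings. Those crossings are degree-$4$ vertices that remain in the chart, and the label-$j$ edges ($|i-j|>1$) that formerly crossed $\rho$ now cross $f_i'$; so the chart has not been returned to the original chart with $\rho$ deleted and a free edge retained, which is what every application of this lemma requires (in the proof of Theorem~\ref{thm4-4} the conclusion is explicitly that one eliminates $\rho$ \emph{and the remaining chart is unchanged}, and the $\Gamma_0$ of a simplified form must be a disjoint union of free edges, not a long edge threading through crossings). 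The missing step is exactly what the paper's proof states: fix one black vertex of the reconnected edge and drag the other along the former course of $\rho$, applying a CII-move each time it meets a former crossing (legitimate because crossings occur only between labels differing by more than $1$), until the edge contracts to a short free edge and all those crossings have disappeared. The same remark applies to your part~(2): after fusing $\rho$ with the loop of $h(\sigma_i,e)$ you must slide an end of the $1$-handle around the former course of $\rho$ (as in Fig.~\ref{fig5}) to recover the standard $h(\sigma_i,e)$; merely ``fusing into a single chart loop'' does not restore the original handle when $\rho$ is not contractible in the complement of the rest of the chart. With this retraction/sliding step added, your argument coincides with the paper's proof.
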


\begin{proof}
Applying a CI-M2 move and moving a black vertex or an end of the 1-handle as indicated in Fig. \ref{fig5}, together with CII-moves, we have the required result. 
\begin{figure}[ht]
\centering
\includegraphics*[height=5cm]{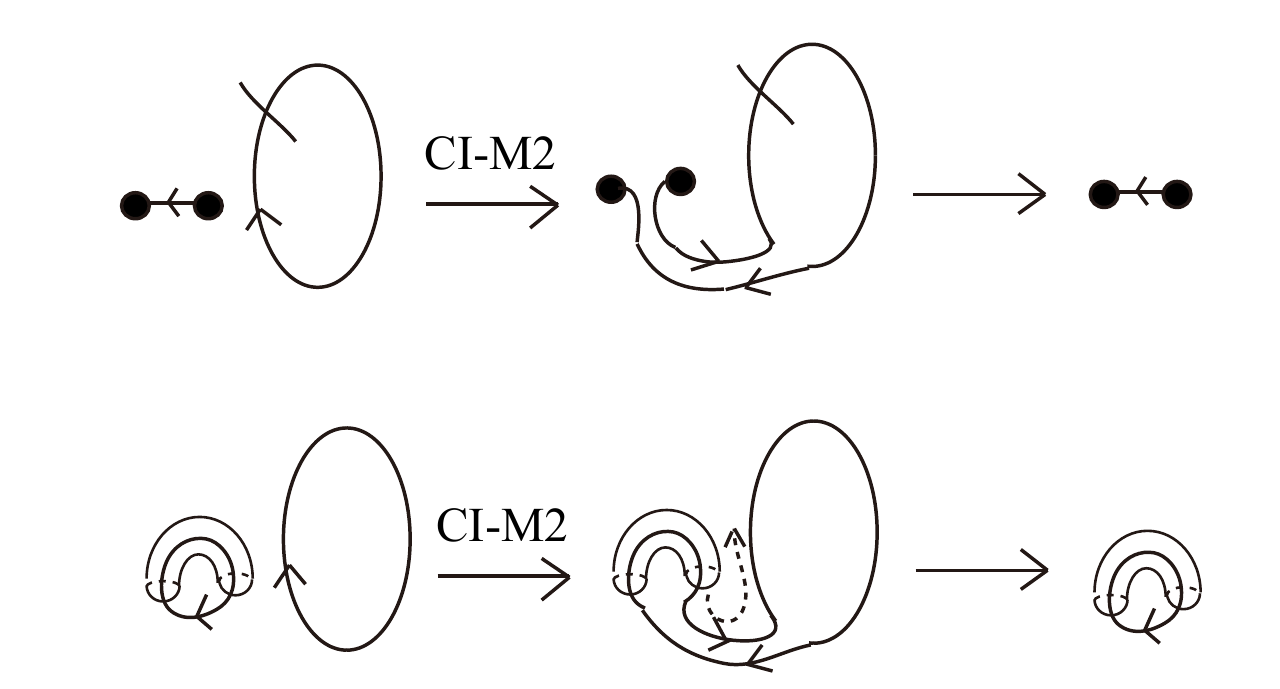}
\caption{Elimination of a chart loop by a free edge or a 1-handle with a chart loop.}
\label{fig5}
\end{figure}
\end{proof}

\begin{lemma}\label{lem3-3}
We have  
\begin{eqnarray}
&& f_i+h(\sigma_{i+1},e) \sim f_{i+1}+h(\sigma_i, e), \label{eq4-1}\\
&& f_i+h(e, \sigma_{i+1}) \sim f_{i+1}+h(e, \sigma_i).\label{eq4-2}
\end{eqnarray}
\end{lemma}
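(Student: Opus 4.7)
The plan is to treat both equivalences in parallel, since the chart manipulations away from the 1-handle are identical and the 1-handle's chart loop enters only as an ambient chart loop that can be moved into or out of the handle via Lemma~\ref{lem2-3}(2). Because of this, once I prove (\ref{eq4-1}) I will get (\ref{eq4-2}) for free by replacing ``parallel to the core loop'' with ``parallel to the cocore'' throughout, as the disk-part of the argument never sees which of the two framings is in play.

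For (\ref{eq4-1}) I would start with $f_i+h(\sigma_{i+1},e)$. Using Lemma~\ref{lem2-3}(2) I push the chart loop of label $i+1$ out of the 1-handle so that I have, on a 2-disk, the free edge $f_i$ together with a chart loop $\rho$ of label $i+1$ encircling it. On this disk I apply a CI-M2 move to $\rho$ to introduce a pair of white vertices joined by an interior arc of label $i$ that crosses the free edge $f_i$, arranging (after a small CI-M1/CII adjustment if needed) that each black vertex of $f_i$ lies on a non-middle arc of one of the new white vertices. A CIII-move --- applicable since $|(i+1)-i|=1$ --- then slides each black vertex of $f_i$ through one of the two white vertices, relabelling it from $i$ to $i+1$. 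After the two slides, cleanup by CI-M1 and CII-moves leaves a free edge of label $i+1$ together with a single chart loop of label $i$ on the disk. One final application of Lemma~\ref{lem2-3}(2), in the reverse direction, absorbs this $i$-labelled chart loop onto the 1-handle, producing $h(\sigma_i,e)$ and the overall configuration $f_{i+1}+h(\sigma_i,e)$.

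For (\ref{eq4-2}) the argument is the same with the chart loop on the handle placed parallel to the cocore rather than the core loop. Since the CI-, CII-, and CIII-moves used are all performed on the 2-disk portion of the chart, and since Lemma~\ref{lem2-3}(2) applies uniformly to $h(\sigma_i,e)$ and to a chart loop attached parallel to the cocore, the resulting equivalence is $f_i+h(e,\sigma_{i+1})\sim f_{i+1}+h(e,\sigma_i)$.

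The main obstacle I expect is the bookkeeping after the two CIII slides: I must verify that the residual graph on the disk really is a single chart loop of label $i$ (rather than a graph of mixed labels $i$ and $i+1$), and that its orientation agrees with the one prescribed by the definition of $h(\sigma_i,e)$ (respectively $h(e,\sigma_i)$). If the natural orientation of the residual loop is opposite to the required one, Lemma~\ref{lem3-1} supplies the needed orientation reversal for the 1-handle. A secondary but routine subtlety is ensuring, by the preliminary CI rearrangement of $\rho$ around $f_i$, that each black vertex of $f_i$ meets a non-middle arc at the introduced white vertex, which is exactly the hypothesis required for the CIII-move to apply.
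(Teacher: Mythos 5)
Your middle step --- converting $f_i$ surrounded by a chart loop of label $i+1$ into $f_{i+1}$ surrounded by a chart loop of label $i$ via CI-M2 and CIII-moves --- is correct, and it is exactly what the paper does (Figs.~\ref{fig7} and \ref{fig8}). But the two steps that sandwich it do not work, and they are where the real content of the lemma lies. The chart loop of $h(\sigma_{i+1},e)$ is parallel to the core loop, hence essential on $F+h$; it cannot be ``pushed out of the 1-handle'' into a 2-disk, and Lemma~\ref{lem2-3}(2) does not do that --- it only lets you create or eliminate an \emph{additional}, contractible loop of the same label using the handle's existing loop, which stays where it is. Symmetrically, at the end you cannot ``absorb'' a contractible loop of label $i$ onto the handle to produce $h(\sigma_i,e)$: a nullhomotopic loop can never become core-parallel (if it encircles a handle end it is isotopic to the cocore, not the core), and if such an absorption were possible it would collapse the distinction between $h(e,e)$ and $h(\sigma_i,e)$. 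Under either reading of your first step, your argument terminates with the handle still carrying its label-$(i+1)$ loop together with a leftover label-$i$ loop on the disk that nothing can eliminate; the label of the essential loop on the handle is never changed.

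The missing idea is the identity of Fig.~\ref{fig6}: by CI-moves realizing the braid relations, $h(\sigma_{i+1},e)$ is equivalent to $h(\sigma_i,e)$ surrounded by two parallel contractible loops, the inner of label $i+1$ and the outer of label $i$. This is the step that actually transfers the label change onto the essential loop. The paper then kills the outer $i$-loop with $f_i$ (Lemma~\ref{lem2-3}), pushes $f_i$ inside the $i+1$-loop and performs your label swap to obtain $f_{i+1}$ inside an $i$-loop, cancels that $i$-loop against the handle's new $\sigma_i$ loop (Lemma~\ref{lem2-3}(2)), and finally kills the surviving $i+1$-loop with $f_{i+1}$. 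Relatedly, (\ref{eq4-2}) is not ``for free'' by symmetry: a contractible loop encircling a handle end interacts with a cocore-parallel loop by conjugation, and the paper needs the extra identity $\sigma_i\sigma_{i+1}\sigma_i^{-1}=\sigma_{i+1}^{-1}\sigma_i\sigma_{i+1}$ together with CI-M3 and CI-M1 moves (Fig.~\ref{fig9}) to turn $h(e,\sigma_{i+1})$ into $h(e,\sigma_i)$ surrounded by an $i+1$-loop; it deliberately avoids the shortcut $h(e,\sigma_i)\sim h(\sigma_i,e)$ precisely so that the proof survives for non-trivial 1-handles.
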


\begin{remark}
Since we assume that 1-handles are trivial, $h(e, \sigma_i) \sim h(\sigma_i, e)$ \cite[Lemma 4.4]{N5}, and (\ref{eq4-2}) directly follows from (\ref{eq4-1}). However, we give a proof avoiding the use of $h(e, \sigma_i) \sim h(\sigma_i, e)$ so that we can generalize the result to branched covering surface-knots with non-trivial 1-handles. We use (\ref{eq4-2}) to show Theorem \ref{thm1-8} in Section \ref{sec4}. 
\end{remark}

\begin{proof}[Proof of Lemma \ref{lem3-3}]
We show (\ref{eq4-1}). Assume that we have $f_i+h(e, \sigma_{i+1})$. 
By Fig. \ref{fig6}, $h(e, \sigma_{i+1})$ is equivalent to a 1-handle $h(\sigma_i, e)$ surrounded by two parallel chart loops such that the inner loop is of label $i+1$ and the outer loop is of label $i$. By Lemma \ref{lem2-3}, using $f_i$, we eliminate the outer loop so that $f_i+h(e, \sigma_{i+1})$ is deformed to the union of $f_i$ and $h(\sigma_i, e)$ surrounded by a chart loop $\rho_1$ of label $i+1$. By a CI-M2 move, we move $f_i$ into the region surrounded by the chart loop $\rho_1$. The free edge $f_i$ becomes surrounded by a chart loop of the label $i+1$, which deforms to a free edge with the label $i+1$, $f_{i+1}$, surrounded by a chart loop $\rho_2$ of the label $i$; see Figs. \ref{fig7} and \ref{fig8}. By Lemma \ref{lem2-3}, we eliminate the chart loop $\rho_2$ by using $h(\sigma_i, e)$. Then, we have $f_{i+1}+h(\sigma_i,e)$ surrounded by $\rho_1$, the chart loop with the label $i+1$. We eliminate $\rho_1$ by using $f_{i+1}$, and we have (\ref{eq4-1}). 

\begin{figure}[ht]
\centering
\includegraphics*[width=13cm]{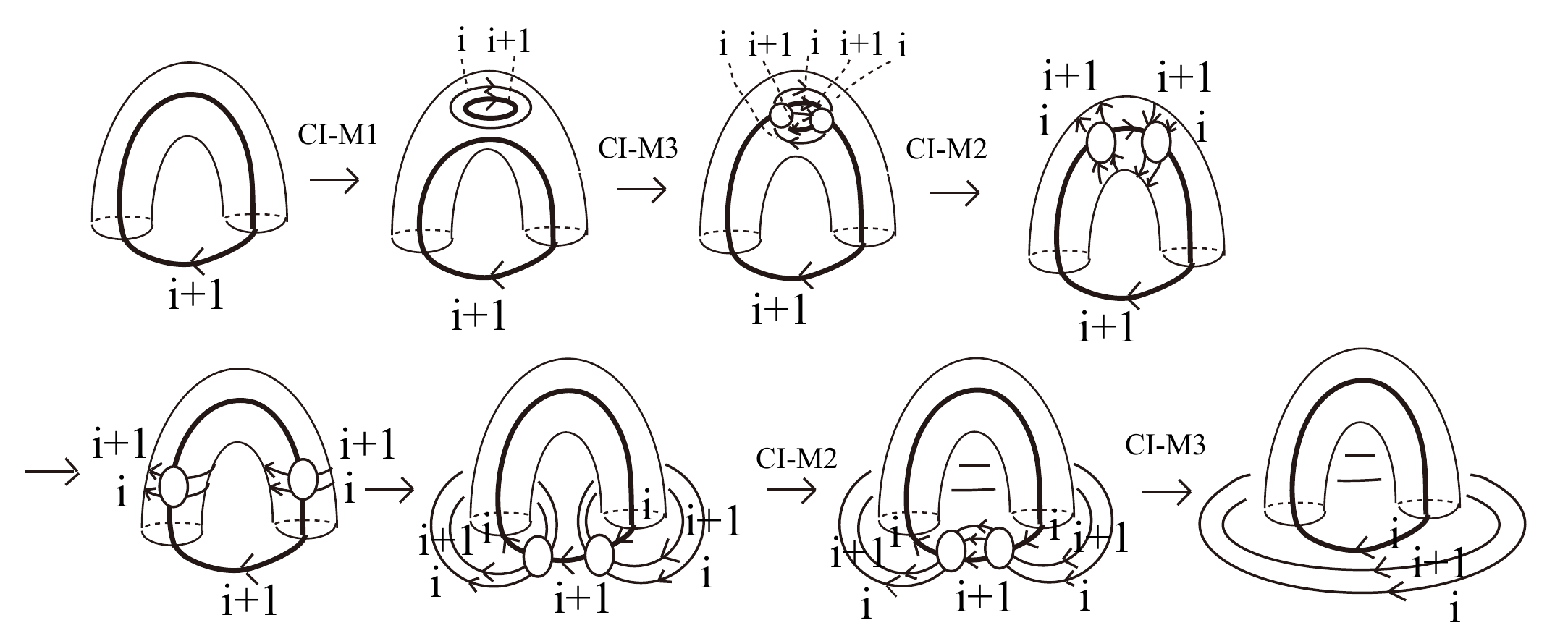}
\caption{$h(\sigma_{i+1}, e)$ is equivalent to $h(\sigma_i, e)$ surrounded by two parallel chart loops.}
\label{fig6}
 \end{figure}

\begin{figure}[ht]
\centering
\includegraphics*[height=2.5cm]{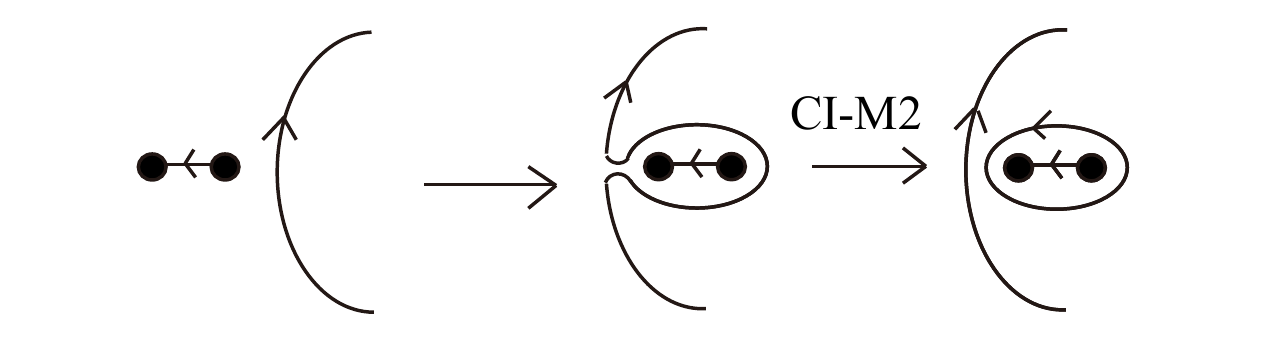}
\caption{Moving a free edge through a chart edge. The free edge becomes surrounded by a chart loop. We omit labels of chart edges. The orientations are for example.}
\label{fig7}
\end{figure}

\begin{figure}[ht]
\centering
\includegraphics*[width=13cm]{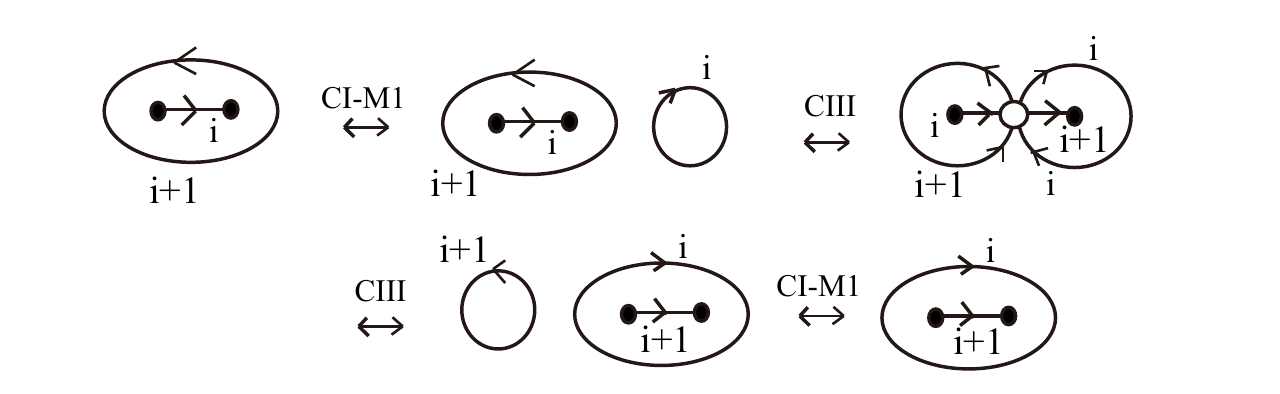}
\caption{Changing the labels of free edges surrounded by a loop.}
\label{fig8}
 \end{figure}

We show (\ref{eq4-2}). Assume that we have $f_i+h(e, \sigma_{i+1})$. By Lemma \ref{lem2-3}, using $f_i$ to make a chart loop with the label $i$, $f_i+h(e, \sigma_{i+1})$ deforms to the union of $f_i$ and $h(e, \sigma_{i+1})$ surrounded by a chart loop of the label $i$. As indicated in Fig. \ref{fig9}, by CI-M2 moves, this deforms to $h(e, \sigma_i \sigma_{i+1} \sigma_i^{-1})$. Since $\sigma_i \sigma_{i+1} \sigma_i^{-1}$ is equivalent to $\sigma_{i+1}^{-1} \sigma_i \sigma_{i+1}$, $h(e, \sigma_i \sigma_{i+1} \sigma_i^{-1})$ deforms to $h(e, \sigma_{i+1}^{-1} \sigma_i \sigma_{i+1})$ by CI-M3 and CI-M1 moves. By an inverse process similar to that indicated in Fig. \ref{fig9}, applying CI-M2 moves, this deforms to $h(e, \sigma_i)$ surrounded by a chart loop $\rho_1$ of the label $i+1$ with the clockwise orientation. By the same argument as in the case (\ref{eq4-1}), we move $f_i$ into the region surrounded by the chart loop $\rho_1$, and then it deforms to a free edge $f_{i+1}$ surrounded by a chart loop $\rho_2$ of the label $i$ with the clockwise orientation. Since $h(e, \sigma_i) \sim h(e, \sigma_i^{-1})$, as indicated in Fig. \ref{fig10}, by a CI-M2 move between $\rho_2$ and the chart loop of the label $i$ of $h(e, \sigma_i^{-1})$, and moving $f_{i+1}$ along the 1-handle, we have $f_{i+1}+h(e, \sigma_i^{-1})$ surrounded by $\rho_1$. We eliminate the chart loop $\rho_1$ by using $f_{i+1}$, and together with $h(e, \sigma_i^{-1}) \sim h(e, \sigma_i)$, we have (\ref{eq4-2}). 
\end{proof}

\begin{figure}[ht]
\centering
\includegraphics*[height=2.5cm]{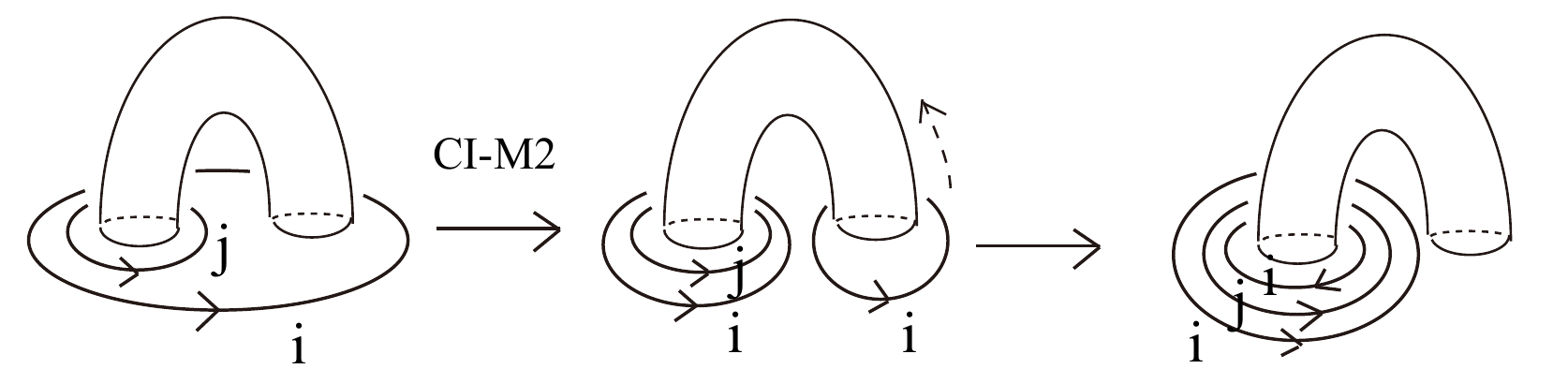}
\caption{$h(e, \sigma_j)$ surrounded by a chart loop of the label $i$ deforms to $h(e, \sigma_{i} \sigma_j \sigma_{i}^{-1})$. }
\label{fig9}
 \end{figure}

\begin{figure}[ht]
\centering
\includegraphics*[height=3cm]{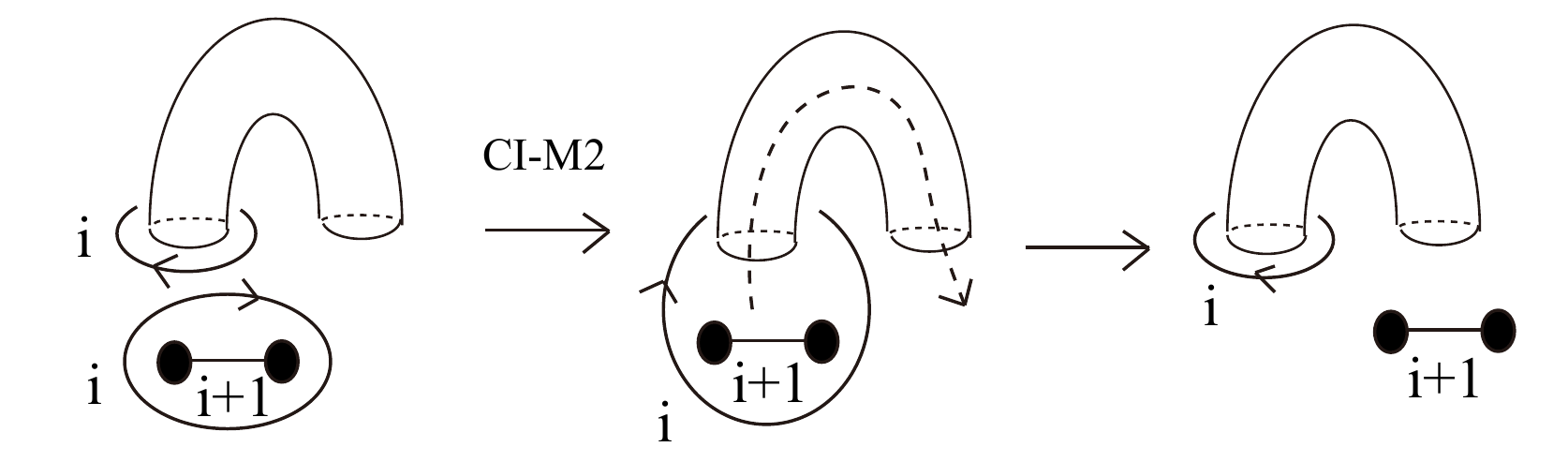}
\caption{Elimination of the chart loop surrounding the free edge. }
\label{fig10}
 \end{figure}

\begin{proof}[Proof of Theorem \ref{thm4-4}]
Let $(F, \Gamma)$ be a branched covering surface-knot such that $b(\Gamma)>0$ and $w(\Gamma)=0$. The chart consists of a finite number of free edges and chart loops. 
Choose a free edge $f$, and let $i$ be the label of $f$. Then 
add $N-2$ 1-handles $\sum_{j\neq i} h(\sigma_j, e)$ to a neighborhood of $f$. 
By Lemma \ref{lem3-3} (\ref{eq4-1}), $f$ deforms to be of any label. 
Hence, by Lemma \ref{lem2-3}, applying a CI-M2 move to $f$ and a chart loop $\rho$ which has $f$ in the neighborhood, and fixing one of the black vertex of $f$ and moving the other black vertex and applying CII-moves if necessary, we eliminate $\rho$ and the remaining chart is unchanged. Applying this process to every chart loop except those on the added 1-handles, we have 
free edges and 1-handles in the form $h(\sigma_i, e)$, 
which is a simplified form. Hence $u_w(F, \Gamma) \leq N-2$ and $u(F, \Gamma) \leq N-2$. 
\end{proof}

\begin{figure}[ht]
\centering
\includegraphics*[width=13cm]{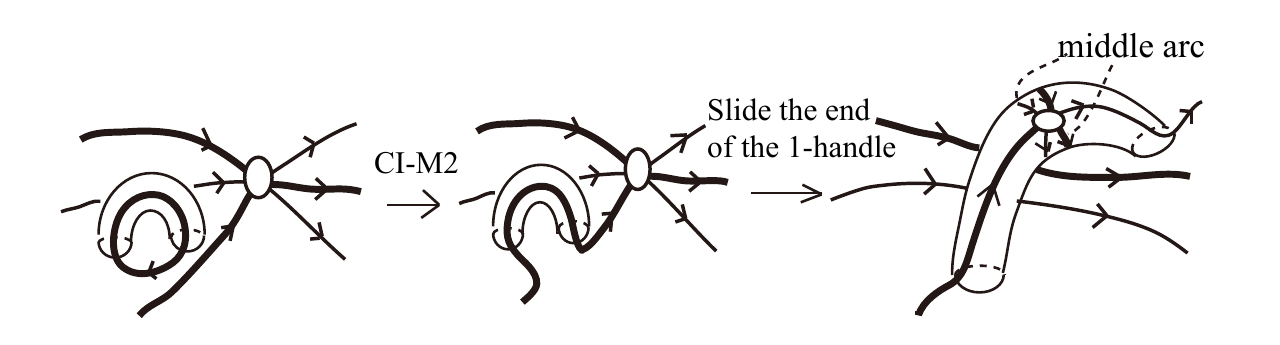}
\caption{Collecting a white vertex on a 1-handle. See Fig. \ref{fig12} for deformations when we slide the end of the 1-handle. For simplicity, we omit labels of chart edges.}
\label{fig11}
\end{figure}
\begin{figure}[ht]
\centering
\includegraphics*[width=13cm]{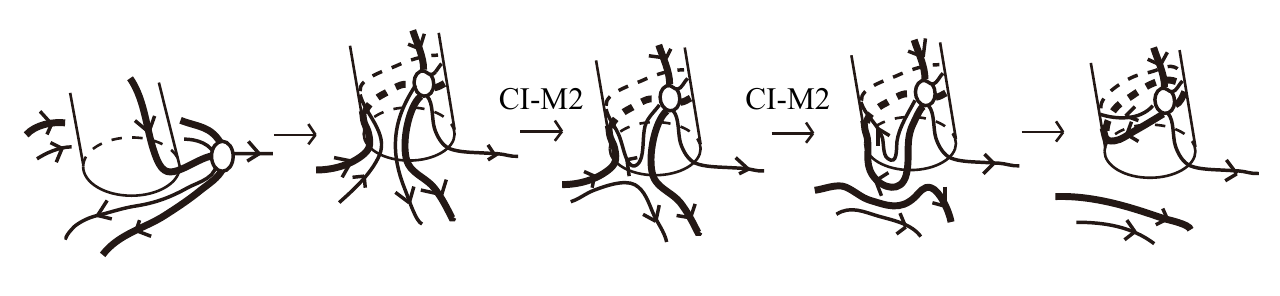}
\caption{Sliding the end of a 1-handle. For simplicity, we omit labels of chart edges.}
\label{fig12}
\end{figure}

\begin{proof}[Proof of Theorem \ref{thm1-16}]
Let $i$ be the label of a non-middle arc of a white vertex. 
We add a 1-handle $h(\sigma_i, e)$ to a neighborhood of a non-middle arc of label $i$ of each white vertex and slide an end of $h(\sigma_i, e)$ to collect the white vertex as indicated in Fig. \ref{fig11} (see also the proof of \cite[Theorem 1.6]{N5}). Then, all white vertices are on 1-handles, and hence all middle arcs are contained in chart edges parallel to cocores, each connected with one white vertex at endpoints; see the rightmost figure in Fig. \ref{fig11}. Hence, any edge connected with a black vertex is, on the other endpoint, connected to another black vertex or a non-middle arc of a white vertex. It follows that applying CIII-moves and CII-moves if necessary, the black vertices become endpoints of free edges. 
The result is the union of free edges and a chart such that there are no black vertices and white vertices are on 1-handles as in the rightmost figure in Fig. \ref{fig11}. The rest of the argument is similar to the proof of Theorem \ref{thm4-4} as follows. Since $b(\Gamma)>0$, we have 
a positive number of free edges. We choose one free edge $f$, and let $i$ be the label of $f$.  Then we add $N-2$ 1-handles
$\sum_{j\neq i} h(\sigma_j,e)$ to a neighborhood of $f$. The free edge $f$ deforms to be of any label (Lemma \ref{lem3-3}). Hence we eliminate chart loops which has $f$ in the neighborhood (Lemma \ref{lem2-3}). The edges connected to white vertices are on the other endpoints also connected to white vertices on 1-handles. Let $\rho$ be one connected component of the union of diagonal edges connecting distinct white vertices, where we assume that two edges forming one crossing are not connected. Since $\rho$ consists of non-middle arcs, when $f$ is in a neighborhood of $\rho$, applying a CI-M2 move and CIII-moves, and CII-moves if necessary, we eliminate $\rho$ and the connected white vertices. Applying these moves while fixing one black vertex, the result becomes the union of chart loops parallel to cocores, and the other chart is unchanged. Repeating these processes, we can eliminate all chart edges and vertices except free edges and chart loops on $\sum_{j\neq i} h(\sigma_j,e)$. Thus $u_w(F, \Gamma) \leq w(\Gamma)+N-2$ and $u(F, \Gamma) \leq w(\Gamma)+N-2$. 
\end{proof}

\section{Proof of Theorem \ref{thm1-8}}\label{sec4}

\begin{lemma}\label{lem4-1}
For $|i-j|=1$, we have
\begin{eqnarray}
&& f_i+h(\sigma_{j}, \sigma_k^\epsilon) \sim f_{j}+h(e, \sigma_i\sigma_k^\epsilon), \label{eq4-10}
\\
&& f_i+h(\sigma_k, \sigma_{j}^\epsilon) \sim f_{j}+h(\sigma_i \sigma_k, e), \label{eq4-11}
\end{eqnarray}
where $|j-k|>1$, $i,j,k\in \{1, \ldots, N-1\}$ and $\epsilon \in \{+1, -1\}$. 
\end{lemma}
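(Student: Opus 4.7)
The plan is to deduce (\ref{eq4-10}) and (\ref{eq4-11}) by adapting the proof of Lemma \ref{lem3-3}, with the chart loop of label $k$ on the 1-handle carried as a spectator that commutes via CII-moves with all label-$j$ data since $|j-k|>1$.

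For (\ref{eq4-10}): using the triviality of the 1-handle and the equivalence $h(\sigma_i,e)\sim h(e,\sigma_i)$ recalled in the remark following Lemma \ref{lem3-3}, I first rotate $h(\sigma_j,\sigma_k^\epsilon)$ so that the two slots are interchanged. I then mimic the proof of Lemma \ref{lem3-3}(\ref{eq4-2}): use Lemma \ref{lem2-3} with $f_i$ to produce a label-$i$ chart loop around the 1-handle, absorb it into the cocore via CI-M2 moves as in Fig.~\ref{fig9}, apply the braid relation $\sigma_i\sigma_j\sigma_i=\sigma_j\sigma_i\sigma_j$ (valid since $|i-j|=1$) together with the commutativity $\sigma_j\sigma_k^\epsilon=\sigma_k^\epsilon\sigma_j$ to rearrange the cocore word, pull a label-$j$ loop back out by the inverse of Fig.~\ref{fig9}, move $f_i$ through it to convert $f_i$ into $f_j$ as in Fig.~\ref{fig8}, and eliminate the remaining chart loops by Lemma \ref{lem2-3} and Lemma \ref{lem3-1}. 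A final rotation recombines the resulting $\sigma_i$ with $\sigma_k^\epsilon$ on the cocore, yielding $h(e,\sigma_i\sigma_k^\epsilon)$. The argument for (\ref{eq4-11}) is parallel with the two slots interchanged; Lemma \ref{lem3-1} absorbs the sign $\epsilon$, and the newly formed $\sigma_i$ ends up adjacent to $\sigma_k$, producing $h(\sigma_i\sigma_k,e)$.

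The main obstacle is the case $|i-k|=1$: there the spectator chart loop of label $k$ does not commute with the transient label-$i$ chart loops that arise during the Lemma \ref{lem3-3}-style deformations, so intermediate configurations of the form $h(\sigma_k^\epsilon,\sigma_i)$ are not defined in the paper's sense. I would resolve this by arranging the CI-moves so that the label-$i$ chart loop produced from $f_i$ is absorbed into the cocore immediately adjacent to $\sigma_k^\epsilon$, forming the product $\sigma_i\sigma_k^\epsilon$ directly without ever CII-moving $\sigma_i$ past $\sigma_k^\epsilon$; a single cocore word does not require its letters to commute, and this product is precisely the word appearing in the target.
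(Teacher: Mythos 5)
Your proposal takes a genuinely different route from the paper, and it has a gap that I do not think can be closed along the lines you describe. The paper does not treat Lemma \ref{lem4-1} as a decorated version of Lemma \ref{lem3-3}: it first deforms $h(\sigma_j,\sigma_k^\epsilon)$ by CI-M2 moves into a configuration containing two white vertices with a label-$i$ arc (Fig.~\ref{fig14}), attaches $f_i$ to that arc by a CI-M2 move, and then uses CIII-moves (plus a CII-move and a CI-M2 move, Fig.~\ref{fig15}) to pass the black vertices through the white vertices; this simultaneously changes the free edge from $f_i$ to $f_j$ and leaves the loop data $h(e,\sigma_i\sigma_k^\epsilon)$. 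The white vertices are not incidental: since $|i-j|=1$, edges of labels $i$ and $j$ can never meet at a crossing, so any deformation trading the label-$j$ loop for a label-$i$ loop must pass through white-vertex configurations, and CIII-moves are the only moves in the paper that let a black vertex interact with them.

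Concretely, two steps of your plan fail. First, the conjugation machinery of Lemma \ref{lem3-3} (Fig.~\ref{fig9}, the braid relation, Fig.~\ref{fig8}) never changes which slot a loop occupies: a loop parallel to the core loop stays parallel to the core loop and likewise for the cocore, and the only move you invoke that mixes the two slots is the inside-out rotation of the trivial handle, which swaps \emph{both} slots at once. Starting from one loop in each slot you can therefore only reach configurations with one loop in each slot, such as $f_j+h(\sigma_i^{\pm 1},\sigma_k^{\pm\epsilon})$, and never the target $f_j+h(e,\sigma_i\sigma_k^\epsilon)$, in which both loops are parallel to the cocore; your ``final rotation recombines $\sigma_i$ with $\sigma_k^\epsilon$ on the cocore'' is not a move available in the paper. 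Second, when $|i-k|=1$ (e.g.\ $i=2$, $j=3$, $k=1$) the configuration $h(\sigma_i,\sigma_k^{\pm\epsilon})$ you land on does not exist as a chart at all: a loop parallel to the core loop and a loop parallel to the cocore intersect an odd number of times on the handle, and for adjacent labels such an intersection cannot be a crossing (this is also why the paper requires the two entries of $h(a,b)$ to commute). Your proposed fix, absorbing the transient label-$i$ loop next to $\sigma_k^\epsilon$ in the same slot, does not help, because the label-$i$ loops needed for the conjugation then have to cross the label-$j$ loop sitting in the other slot, which is again forbidden since $|i-j|=1$. The route through white vertices and CIII-moves is what the lemma actually requires.
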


 \begin{lemma}\label{lem4-1-2}
For $|i-j|=1$, we have 
\begin{eqnarray}
&& f_i+f_i+f_{j} \sim f_i+f_{j}+f_{j}, \\
&& f_i+f_i+h(\sigma_j, e) \sim f_i+f_{j}+h(e,e), \label{eq4-4y}\\
&& f_i+f_i+h(\sigma_{j}, \sigma_k^\epsilon) \sim f_i+f_{j}+h(e,\sigma_k^\epsilon), \\
&& f_i+f_i+h(e,\sigma_j^\epsilon) \sim f_i+f_{j}+h(e,e), \label{eq4-4s}\\
&& f_i+f_i+h(\sigma_k, \sigma_{j}^\epsilon) \sim f_i+f_{j}+h(\sigma_k, e), 
\end{eqnarray}
where $|j-k|>1$, $i,j,k \in \{1, \ldots, N-1\}$ and $\epsilon \in \{+1, -1\}$. 
\end{lemma}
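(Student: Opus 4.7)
The plan is to handle all five equivalences with a single two-step template: one copy of $f_i$ is used, together with the remaining object in the configuration, to perform a label swap $i \leftrightarrow j$ via Lemma \ref{lem3-3}, Lemma \ref{lem4-1}, or the label-swap deformation of Figures \ref{fig7} and \ref{fig8}. This step introduces an auxiliary $\sigma_i$ chart loop that the second copy of $f_i$ then cancels via Lemma \ref{lem2-3}(1).

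For the four equivalences involving a 1-handle, I apply the relevant prior lemma to one $f_i$ together with the 1-handle: (\ref{eq4-1}) of Lemma \ref{lem3-3} for the $h(\sigma_j, e)$ case, (\ref{eq4-2}) of Lemma \ref{lem3-3} for the $h(e, \sigma_j^\epsilon)$ case, (\ref{eq4-10}) of Lemma \ref{lem4-1} for $h(\sigma_j, \sigma_k^\epsilon)$, and (\ref{eq4-11}) of Lemma \ref{lem4-1} for $h(\sigma_k, \sigma_j^\epsilon)$; Lemma \ref{lem3-1} is invoked first whenever needed to align the sign $\epsilon$ with the statements of those lemmas. In each case the transformed 1-handle differs from the desired right-hand side by exactly one extra $\sigma_i$ chart loop, parallel to the core or to the cocore. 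The second $f_i$, preserved by the loop-elimination procedure of Lemma \ref{lem2-3}(1) just as in the proof of Lemma \ref{lem3-3}, cancels this auxiliary loop and yields precisely the right-hand side.

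The first equivalence $f_i + f_i + f_j \sim f_i + f_j + f_j$ has no 1-handle to absorb an intermediate loop, and is where I expect the main subtlety to lie. Here $f_j$ plays the role of the auxiliary object: by Lemma \ref{lem2-3}(1), I use $f_j$ to create a chart loop $\rho$ of label $j$ encircling one of the two $f_i$'s, while keeping $f_j$ intact. Since $|i-j|=1$, the label-swap deformation from Figures \ref{fig7} and \ref{fig8} converts the enclosed $f_i$ inside the $j$-loop into an $f_j$ inside an $i$-loop. The remaining external $f_i$ then eliminates this $i$-loop via Lemma \ref{lem2-3}(1), producing $f_i + f_j + f_j$. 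The technical point to verify is that the loop creation from $f_j$, the internal label swap, and the final loop elimination can all be realized as local moves that leave the other edges undisturbed; once this is checked, the argument closes, and the remaining cases follow almost mechanically from Lemmas \ref{lem3-3}, \ref{lem4-1}, \ref{lem2-3}, and \ref{lem3-1}.
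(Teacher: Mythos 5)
Your proof is correct, but it routes four of the five relations differently from the paper. The paper uses a single uniform argument for all five cases: one copy of $f_i$ is pushed through a $j$-labelled arc of the third object $h$ (Fig.~\ref{fig16}), becoming an $f_i$ surrounded by a $j$-loop, which by the label swap of Fig.~\ref{fig8} turns into an $f_j$ surrounded by an $i$-loop $\rho$; the second $f_i$ eliminates $\rho$ via Lemma~\ref{lem2-3}, yielding $f_i+f_j+h$, and finally the new $f_j$ eliminates the $j$-labelled loop of $h$. Your treatment of the first relation $f_i+f_i+f_j\sim f_i+f_j+f_j$ is essentially this same argument (creating the $j$-loop from $f_j$ around $f_i$ rather than pushing $f_i$ through $f_j$ is the same local move as Fig.~\ref{fig16}). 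For the 1-handle cases you instead invoke Lemmas~\ref{lem3-3} and~\ref{lem4-1} as black boxes and use the second $f_i$ only to cancel the one leftover $\sigma_i$ loop; this works because after applying (\ref{eq4-1}), (\ref{eq4-2}), (\ref{eq4-10}) or (\ref{eq4-11}) the residual $i$-labelled loop is a crossing-free loop parallel to the core or cocore, so Lemma~\ref{lem2-3}(1) removes it while preserving the free edge, and Lemma~\ref{lem3-1} indeed handles the sign for the $h(e,\sigma_j^\epsilon)$ case. The paper itself notes that (\ref{eq4-4y}) and (\ref{eq4-4s}) can be obtained from Lemmas~\ref{lem3-1}--\ref{lem3-3} in exactly your manner; your contribution is to extend this modular derivation to the $\sigma_k$ cases via Lemma~\ref{lem4-1}. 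What the paper's version buys is uniformity and independence from Lemma~\ref{lem4-1}; what yours buys is reuse of established lemmas with less figure-chasing, at the cost of a case split and of still needing the paper's direct argument for the free-edge-only relation.
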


\begin{lemma}\label{lem4-2}
Let $N \geq 4$. For $|i-j|=1$, We have 
\begin{equation}
f_i+h(\sigma_k, \sigma_{j}^\epsilon)+h(e,e) \sim f_{j}+h(\sigma_k, e)+h(\sigma_i,e), 
\end{equation}
where $|j-k|>1$, $i,j,k \in \{1, \ldots, N-1\}$ and $\epsilon \in \{+1, -1\}$. 
\end{lemma}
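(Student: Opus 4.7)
The plan is to combine Lemma~\ref{lem4-1} with a single one-handle splitting that exploits the empty 1-handle $h(e,e)$. First I would apply Lemma~\ref{lem4-1}, equation~(\ref{eq4-11}), to the left-hand side, obtaining
\[
f_i + h(\sigma_k, \sigma_j^\epsilon) + h(e,e) \sim f_j + h(\sigma_i\sigma_k, e) + h(e,e).
\]
This single application simultaneously relabels the free edge from $i$ to $j$ and converts the cocore loop of label $j$ into a second core loop of label $i$ on the non-trivial 1-handle, so it produces exactly the free edge $f_j$ demanded on the right-hand side. The task reduces to the pure 1-handle identity
\[
h(\sigma_i\sigma_k, e) + h(e,e) \sim h(\sigma_i, e) + h(\sigma_k, e),
\]
with the free edge $f_j$ playing no further role.

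To establish the 1-handle identity I would use Assumption~\ref{assump1} (1-handles are trivial) to place both handles inside a common 3-ball, with $h(e,e)$ attached along a disk adjacent to $h(\sigma_i\sigma_k, e)$. The non-trivial handle carries two parallel chart loops $\rho_i$ and $\rho_k$ around its core, sitting at distinct radii in its $B^2$-factor. I then isotope one end of $h(e,e)$ along $F$ so that it passes through the outer loop, say $\rho_i$. After this ambient isotopy, $\rho_i$ is pinched into a pair of parallel arcs that straddle the base disk of $h(e,e)$; two CI-M2 moves cancel those arcs and re-form $\rho_i$ as a single chart loop parallel to the core of $h(e,e)$. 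The loop $\rho_k$ is left untouched on the original handle, so the configuration becomes $h(\sigma_k,e) + h(\sigma_i,e)$, as required. Composing with the first step yields the statement.

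The hypothesis $N \geq 4$ enters only because one needs three labels $i, j, k \in \{1, \ldots, N-1\}$ satisfying $|i-j|=1$ and $|j-k|>1$ to coexist, and it is not otherwise used. The main obstacle is making the handle-slide step rigorous within the C-move framework: when $|i-k|=1$ the loops $\rho_i$ and $\rho_k$ do not commute, so I would need to verify that the isotopy through $\rho_i$ never drags $\rho_k$ along, and that every intermediate diagram remains a legitimate chart. This can be arranged by performing the slide inside a tubular neighborhood of $\rho_i$ disjoint from $\rho_k$, so that only CI-M2 moves together with ambient isotopy inside the 3-ball are used.
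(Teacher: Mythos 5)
Your first step is fine: applying Lemma~\ref{lem4-1} (\ref{eq4-11}) to get $f_j+h(\sigma_i\sigma_k,e)+h(e,e)$ is a legitimate alternative opening (the paper instead first uses $f_i$ and Lemma~\ref{lem2-3} to turn $h(e,e)$ into $h(\sigma_i,e)$, then relabels the free edge via Lemma~\ref{lem4-8}, then kills the $\sigma_j^\epsilon$ cocore loop with $f_j$). The gap is in your second step. The claimed free-edge-independent identity $h(\sigma_i\sigma_k,e)+h(e,e)\sim h(\sigma_i,e)+h(\sigma_k,e)$ is not established by the handle slide you describe. The loop $\rho_i$ is parallel to the core loop of the \emph{first} handle, hence is an essential curve isotopic to that core loop; a core-parallel loop on $h(e,e)$ is a different, non-isotopic essential curve. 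When you push an end of $h(e,e)$ across the arc of $\rho_i$ lying in $F$, the edge $\rho_i$ acquires a detour (a ``finger'') running over the tube of $h(e,e)$ and back; the CI-M2/CI-M1 moves you invoke can only cancel this finger and restore the original picture --- they cannot reconnect $\rho_i$ into a loop that travels over the tube of $h(e,e)$ once, because that would change the isotopy class of the chart loop on the underlying surface. In other words, transferring a chart loop from one 1-handle to another is exactly the operation that, throughout this paper, requires a mediating free edge (Lemma~\ref{lem2-3}: eliminate the loop using $f_m$, then recreate it elsewhere using $f_m$); there is no ``pure 1-handle'' version of it, and if there were, much of the machinery of Lemmas~\ref{lem2-3}--\ref{lem4-8} would be unnecessary.

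Your argument can be repaired precisely by letting $f_j$ keep playing a role: from $f_j+h(\sigma_i\sigma_k,e)+h(e,e)$, use $f_j$ (Lemma~\ref{lem2-3}) to put a loop $\sigma_j$ on $h(e,e)$; move $f_j$ through the arc of label $i$ on the first handle to relabel it to $f_i$ (the argument of Lemmas~\ref{lem4-1-2}/\ref{lem4-8}), absorbing the resulting $j$-labeled loop into the $\sigma_j$ loop on the second handle; use $f_i$ to delete the $i$-labeled loop on the first handle, giving $f_i+h(\sigma_k,e)+h(\sigma_j,e)$; and finish with Lemma~\ref{lem3-3} to trade $f_i+h(\sigma_j,e)$ for $f_j+h(\sigma_i,e)$. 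As written, however, the proposal's key step is unjustified and the stated mechanism for it would fail.
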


\begin{lemma}\label{lem4-3}
Let $N \geq 4$. We have 

\begin{equation}
f_i+f_{i+1}+h(e, \sigma_{i+2}^\epsilon)\sim f_i+f_{i+1}+h(\sigma_{i+2}, e),
\end{equation}
where $i \in \{1, \ldots, N-3\}$ and $\epsilon \in \{+1, -1\}$. 
\end{lemma}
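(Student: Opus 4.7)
The plan is to convert the cocore chart loop $\sigma_{i+2}^{\epsilon}$ on the 1-handle into a core chart loop $\sigma_{i+2}$, preserving the two free edges, by using $f_i$ to introduce and later remove an auxiliary chart loop of label $i$ on the core of the handle, and using $f_{i+1}$ as the ``pivot'' free edge for a single application of Lemma \ref{lem4-1}. The key observation making this work is that $|i-(i+2)|=2>1$, so an $h(\sigma_i,\sigma_{i+2})$ satisfies the hypothesis of Lemma \ref{lem4-1}(\ref{eq4-11}).

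In detail, I first reduce to $\epsilon=+1$ by Lemma \ref{lem3-1}. Using $f_i$ together with the converse direction of Lemma \ref{lem2-3}(1), I create a chart loop $\rho$ of label $i$ near $f_i$ and isotope $\rho$ by CI-moves to run parallel to the core loop of the 1-handle; this converts $h(e,\sigma_{i+2})$ into $h(\sigma_i,\sigma_{i+2})$, while $f_i$ remains. Next I apply Lemma \ref{lem4-1} equation (\ref{eq4-11}) to the pair $f_{i+1}+h(\sigma_i,\sigma_{i+2})$, taking the roles of $(i,j,k)$ in that lemma to be $(i+1,\,i+2,\,i)$; the hypotheses $|(i+1)-(i+2)|=1$ and $|(i+2)-i|=2>1$ both hold, so the configuration becomes $f_{i+2}+h(\sigma_{i+1}\sigma_i,e)$, that is, a 1-handle carrying two parallel chart loops on its core of labels $i+1$ and $i$. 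I then use $f_i$ a second time, via Lemma \ref{lem2-3}(1), to eliminate the label-$i$ chart loop from the core, leaving $h(\sigma_{i+1},e)$. Finally I apply Lemma \ref{lem3-3} equation (\ref{eq4-1}) with the index shifted by one and read in reverse, namely $f_{i+2}+h(\sigma_{i+1},e)\sim f_{i+1}+h(\sigma_{i+2},e)$, to reach the right-hand side. Concatenating these steps yields the claim.

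The only substantive geometric point is the insertion and later deletion of the auxiliary loop $\rho$: Lemma \ref{lem2-3} produces $\rho$ in a small neighborhood of $f_i$, and one must check that $\rho$ can be isotoped across $F$ to encircle the core of the 1-handle without generating unwanted intersections with $f_{i+1}$ or with the cocore loop of label $i+2$ already sitting on the handle. This is unproblematic because $f_i$, $f_{i+1}$, and the 1-handle can be placed in pairwise disjoint 2-disks of $F$ and $\rho$ can be freely stretched by CI-M1 moves; everything else is a purely formal invocation of lemmas already established. The proof crucially uses the second free edge $f_{i+1}$, which is what allows Lemma \ref{lem4-1} to be applied to transfer the chart loop from the cocore to the core; without it, converting $h(e,\sigma_{i+2}^{\epsilon})$ to $h(\sigma_{i+2},e)$ in this indirect way would not be possible.
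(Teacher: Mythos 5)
Your argument is correct, but it takes a genuinely different route from the paper's. Both proofs begin the same way: use $f_i$ and Lemma \ref{lem2-3} to install a chart loop of label $i$ parallel to the core, turning the handle into $h(\sigma_i,\sigma_{i+2}^\epsilon)$. From there the paper does not use Lemma \ref{lem4-1} at all; instead it applies Lemma \ref{lem4-8} twice to relabel the free edges ($f_i+f_{i+1} \to f_{i+1}+f_{i+1} \to f_{i+1}+f_{i+2}$), using the handle's own loops of labels $i$ and $i+2$ as the relabeling medium, then kills the cocore loop with the newly created $f_{i+2}$ via Lemma \ref{lem2-3}, and finally restores the labels with two applications of Lemma \ref{lem3-3}. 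You instead apply (\ref{eq4-11}) of Lemma \ref{lem4-1} to $f_{i+1}+h(\sigma_i,\sigma_{i+2})$ with the roles $(i,j,k)=(i+1,i+2,i)$, which absorbs the cocore loop into the core as $h(\sigma_{i+1}\sigma_i,e)$ while converting $f_{i+1}$ into $f_{i+2}$; then $f_i$ removes the auxiliary label-$i$ core loop, and a single application of Lemma \ref{lem3-3} finishes. Your index checks ($|(i+1)-(i+2)|=1$ and $|(i+2)-i|>1$) are exactly right, the spare disjoint free edge $f_i$ does not obstruct the local application of Lemma \ref{lem4-1}, and the elimination of the label-$i$ core loop is legitimate since it is disjoint from the parallel label-$(i+1)$ loop. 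Your route is slightly shorter and avoids Lemma \ref{lem4-8} entirely, at the cost of invoking the heavier Lemma \ref{lem4-1}; the paper's route stays within the ``relabel free edges, then cancel loops of matching label'' paradigm it uses throughout Section \ref{sec4}. (Your initial reduction to $\epsilon=+1$ is harmless but unnecessary, since (\ref{eq4-11}) already holds for both signs of $\epsilon$.)
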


\begin{lemma}\label{lem4-4}
We have 
\begin{eqnarray}
f_i+h(e, \sigma_{i+1}^\epsilon)+h(e, \sigma_{i+2}^\delta) &\sim&  f_i+h(\sigma_{i+1}, e)+h(\sigma_{i+2}, e), \label{eq4-8a}\\
f_i+h(e, \sigma_{i+1}^\epsilon)+h(\sigma_{i+2}, e)
&\sim& f_i+h(\sigma_{i+1}, e)+h(\sigma_{i+2}, e), \label{eq4-9a}\\
 f_i+h(\sigma_{i+1}, e)+h(e, \sigma_{i+2}^\delta)
&\sim& f_i+h(\sigma_{i+1}, e)+h(\sigma_{i+2}, e) \label{eq4-10a},
\end{eqnarray}
where $i \in \{1, \ldots, N-3\}$ and $\epsilon, \delta \in \{+1, -1\}$. 
\end{lemma}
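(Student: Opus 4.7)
The plan is to prove the three equivalences by reducing each to a conjugation-and-pull-out argument in the style of the proof of Lemma \ref{lem3-3} (\ref{eq4-2}). Following the Remark after Lemma \ref{lem3-3}, we refrain from invoking the identity $h(e,\sigma_j)\sim h(\sigma_j,e)$ directly; every conversion of a cocore-type chart loop into a core-type chart loop is instead mediated by $f_i$ together with an adjacent-label core-type 1-handle in the ambient configuration, with Lemma \ref{lem2-3}(2) supplying the needed conjugating chart loops.

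For (\ref{eq4-10a}), I first apply Lemma \ref{lem3-3} (\ref{eq4-1}) to $f_i+h(\sigma_{i+1},e)$ and then Lemma \ref{lem3-3} (\ref{eq4-2}) to the resulting $f_{i+1}+h(e,\sigma_{i+2}^\delta)$ (legitimate since $|(i+1)-(i+2)|=1$), rewriting the left-hand side as $f_{i+2}+h(\sigma_i,e)+h(e,\sigma_{i+1})$; a parallel double use of (\ref{eq4-1}) puts the right-hand side into $f_{i+2}+h(\sigma_i,e)+h(\sigma_{i+1},e)$. It now suffices to convert $h(e,\sigma_{i+1})$ into $h(\sigma_{i+1},e)$ in the presence of $f_{i+2}$ and $h(\sigma_i,e)$. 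To do this, I use Lemma \ref{lem2-3}(2) on $h(\sigma_i,e)$ to manufacture a chart loop of label $i$ encircling $h(e,\sigma_{i+1})$; the braid identity $\sigma_i\sigma_{i+1}\sigma_i^{-1}=\sigma_{i+1}^{-1}\sigma_i\sigma_{i+1}$ rewrites the enclosed 1-handle as $h(e,\sigma_i)$ surrounded by a chart loop of label $i+1$, as in Fig. \ref{fig9}. Pushing $f_{i+2}$ across that outer $(i+1)$-loop (Fig. \ref{fig7}) changes its label to $f_{i+1}$ and deposits an inner chart loop of label $i$, which cancels against the loop still present on $h(\sigma_i,e)$ by a CI-M2 move; unwinding the initial Lemma \ref{lem3-3} shifts completes (\ref{eq4-10a}). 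Equivalence (\ref{eq4-9a}) is proved by the symmetric argument: the analogous shift reduces it to converting $h(e,\sigma_i)$ to $h(\sigma_i,e)$ in the presence of $f_{i+2}$ and $h(\sigma_{i+1},e)$, with the conjugating loop of label $i+1$ now sourced from $h(\sigma_{i+1},e)$.

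For (\ref{eq4-8a}) I would chain the previous two: first apply (\ref{eq4-10a}) in reverse to the right-hand side to obtain $X=f_i+h(\sigma_{i+1},e)+h(e,\sigma_{i+2}^\delta)$; then establish $\mathrm{LHS}\sim X$ by adapting the argument of (\ref{eq4-9a}) to the case when the ``helper'' 1-handle is $h(e,\sigma_{i+2}^\delta)$ rather than $h(\sigma_{i+2},e)$. For the adaptation, after shifting via Lemma \ref{lem3-3} (\ref{eq4-2}) both sides into the form $f_{i+2}+(\text{1-handle at label }i)+h(e,\sigma_{i+1})$, the conjugating chart loop of label $i+1$ that the argument in (\ref{eq4-9a}) sources from $h(\sigma_{i+1},e)$ is here sourced from the $h(e,\sigma_{i+1})$ component after a preparatory end-slide on the 1-handle (Fig. \ref{fig12}), and the rest of the argument proceeds verbatim.

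The main obstacle throughout is the pull-out step in the conjugation argument: the chart loops in question live partly on the boundary spheres of the 1-handles in $\mathbb{R}^4$ rather than on the base surface $F$, so the CI-M2, CII-, and CIII-moves must be sequenced with an end-slide as in Fig. \ref{fig12} to avoid introducing spurious black or white vertices, and the CII-moves that carry $f_{i+2}$ past the $h(\sigma_i,e)$-generated loop are justified by the non-adjacency $|(i+2)-i|>1$. The orientation ambiguity produced by the braid identity and the signs $\epsilon,\delta$ are absorbed by Lemma \ref{lem3-1}, so the final form does not depend on them.
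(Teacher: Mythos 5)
Your reduction of (\ref{eq4-10a}), via Lemma \ref{lem3-3}, to the residual statement $f_{i+2}+h(\sigma_i,e)+h(e,\sigma_{i+1})\sim f_{i+2}+h(\sigma_i,e)+h(\sigma_{i+1},e)$ is fine, but the mechanism you propose for that residual step breaks down, and it is precisely the step that carries the whole content of the lemma. After you conjugate $h(e,\sigma_{i+1})$ by the label-$i$ loop borrowed from $h(\sigma_i,e)$ and rewrite it as $h(e,\sigma_i)$ inside a loop $\rho_1$ of label $i+1$, pushing $f_{i+2}$ across $\rho_1$ does not deposit a loop of label $i$: by the rule of Figs.~\ref{fig7} and \ref{fig8} (a free edge of label $a$ surrounded by a loop of label $b$ with $|a-b|=1$ becomes a free edge of label $b$ surrounded by a loop of label $a$), $f_{i+2}$ becomes $f_{i+1}$ surrounded by a loop of label $i+2$, and nothing of label $i+2$ remains in the configuration to absorb that stray loop. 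Worse, even if the label bookkeeping worked out, your sequence never converts the cocore-parallel loop into a core-parallel one: every move you apply to the second handle leaves it of the form $h(e,\cdot)$, so you cannot arrive at $h(\sigma_{i+1},e)$. The same defect propagates to your treatments of (\ref{eq4-9a}) and (\ref{eq4-8a}), which rest on the same mechanism.

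The paper's proof supplies exactly the missing ingredient. It first uses $f_i$ (Lemma \ref{lem2-3}) to install a core-parallel loop of label $i$ on the \emph{other} handle, so that the two handles carry loops of adjacent labels $i$ and $i+1$; Lemma \ref{lem4-8} then relabels the free edge $f_i\to f_{i+1}$; Lemma \ref{lem2-3} with $f_{i+1}$ strips the unwanted cocore-parallel loop $\sigma_{i+1}^{\epsilon}$ entirely, leaving $h(e,e)$; a fresh core-parallel loop of label $i+1$ is then installed on that bare handle, and the cycle repeats with $f_{i+1}\to f_{i+2}$ to strip the $\sigma_{i+2}^{\delta}$ loop, before Lemma \ref{lem3-3} restores the original labels. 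The strip-and-reinstall passage through $h(e,e)$ is what changes a cocore-parallel loop into a core-parallel one without invoking $h(e,\sigma_j)\sim h(\sigma_j,e)$; your proposal has no counterpart to it. To repair your argument you would need Lemma \ref{lem4-8} (or an equivalent relabelling statement for free edges in the presence of cocore-parallel loops on 1-handles) together with such a strip-and-reinstall step.
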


\begin{proof}[Proof of Theorem \ref{thm1-8}]
Since a crossing of a chart does not exist for $N \leq 3$, a weak simplified form is a simplified form when $N \leq 3$. Thus it suffices to show the result for $N \geq 4$. 
Let $(F, \Gamma)$ be a branched covering surface-knot of degree $N \geq 4$ in a weak simplified form with $b(\Gamma)>0$. 
By Lemma \ref{lem2-3} using free edges and CII-moves, we eliminate chart loops of the same label with those of free edges. 
By Assumption \ref{assump1}, there are edges of all labels.  
By Lemma \ref{lem4-1}, if the branched covering surface-knot contains $f_i+h(\sigma_{j}, \sigma_k^\epsilon)$ ($|i-j|=1$, $|j-k|>1$), then we deform this to $f_{j}+h(e, \sigma_i \sigma_k^\epsilon)$ and using $f_j$, we eliminate chart loops of the label $j$, and  Lemma \ref{lem4-1} again, we deform $f_{j}+h(e, \sigma_i \sigma_k^\epsilon)$ to the original form $f_i+h(\sigma_{j}, \sigma_k^\epsilon)$. By similar processes, together with Lemma \ref{lem3-3}, we 
deform $(F, \Gamma)$ to a form consisting of free edges and 1-handles such that  and the labels of the chart loops are mutually distinct and do not contain those of free edges. If $h(F)=0$, then we have a simplified form. Assume that $h(F) \neq 0$. 
By Lemma \ref{lem4-1-2}, we deform $(F, \Gamma)$ so that free edges have as many labels as possible. We deform $(F, \Gamma)$ to the form such that the number of the set of labels is $\max \{f(\Gamma), N-1\}$. 
If $N-f(\Gamma)-h(F)-1 > 0$, then add $N-f(\Gamma)-h(F)-1$ copies of $h(e,e)$. 
By Lemma \ref{lem4-2}, we deform $(F, \Gamma)$ to the form such that each 1-handle has at most one chart loop. 
By Lemmas \ref{lem4-3} and \ref{lem4-4}, together with Lemma \ref{lem3-3}, we deform each 1-handle to the form $h(\sigma_i, e)$. Thus we have a simplified form, and $u(F, \Gamma) \leq \max \{0, N-f(\Gamma)-h(F)-1\}$.
\end{proof}

\subsection{Proofs of Lemmas \ref{lem4-1}--\ref{lem4-4}}

\begin{proof}[Proof of Lemma \ref{lem4-1}]
We show (\ref{eq4-10}). 
Assume that $f_i+h(\sigma_{j}, \sigma_k^\epsilon)$, where $|i-j|=1$ and $|j-k|>1$.  
By CI-M2 moves, $h(\sigma_{j}, \sigma_k^\epsilon)$ deforms to a form with two white vertices as indicated in the rightmost figure in Fig. \ref{fig14}. We apply a CI-M2 move between $f_i$ and an arc with the label $i$ of the 1-handle as in the leftmost figure in Fig. \ref{fig15}. Then, applying CIII-moves, a CII-move and a CI-M2 move as in Fig. \ref{fig15}, we have $f_{j}+h(e, \sigma_{i}\sigma_k^\epsilon)$, which implies the required result. The other relation (\ref{eq4-11}) is shown similarly. 
\end{proof}

\begin{figure}[ht]
\centering
\includegraphics*[height=3cm]{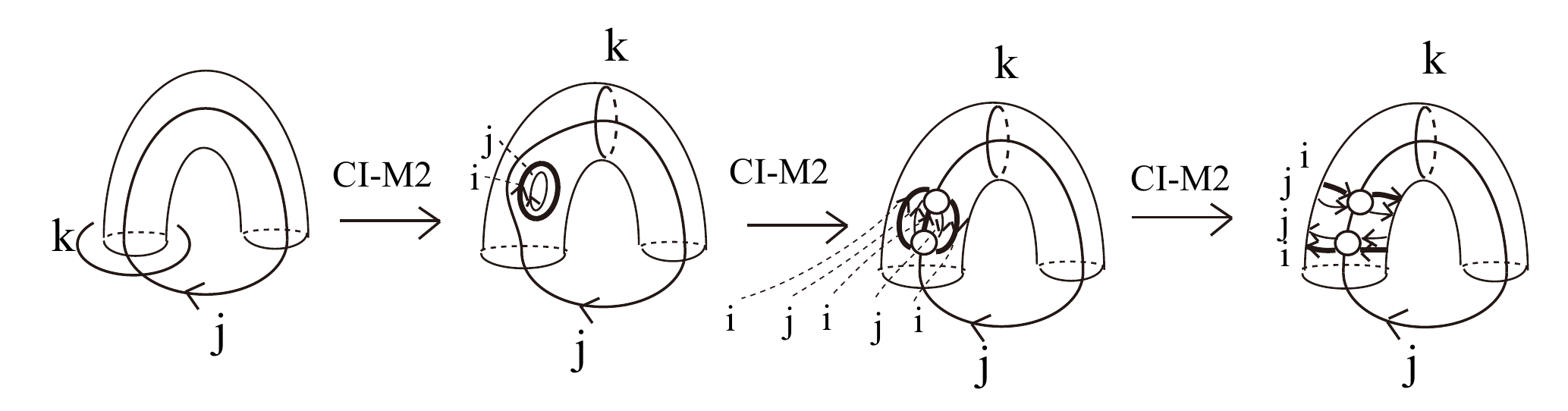}
\caption{The 1-handle $h(\sigma_{j}, \sigma_k^\epsilon)$ is equivalent to a 1-handle with two white vertices, where $|i-j|=1$ and $|j-k|>1$.}
\label{fig14}
 \end{figure}

\begin{figure}[ht]
\centering
\includegraphics*[height=6cm]{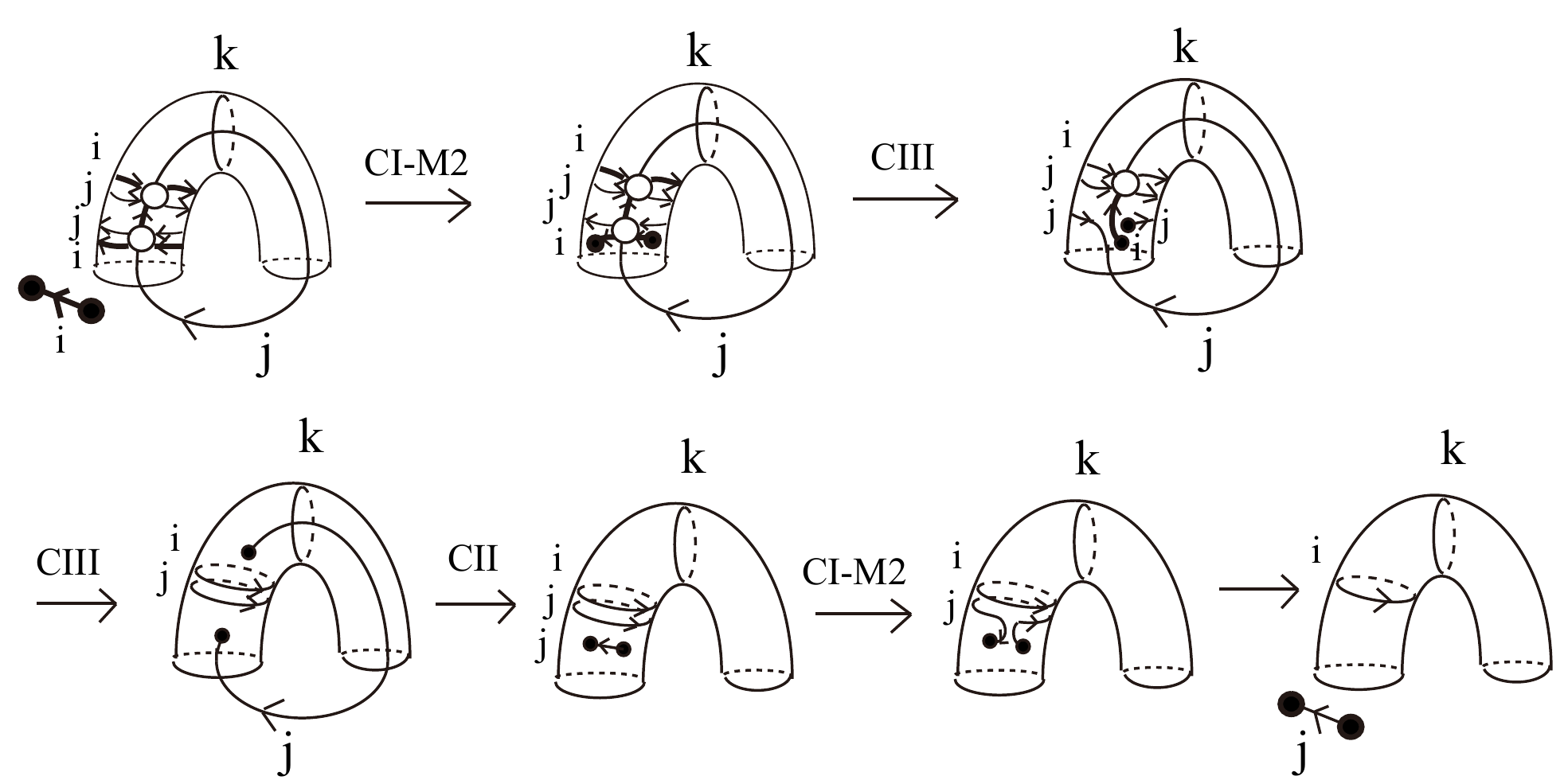}
\caption{$f_{i}+h(\sigma_{j}, \sigma_k^\epsilon) \sim f_{j}+h(e,\sigma_{i}\sigma_k^\epsilon)$, where $|i-j|=1$ and $|j-k|>1$. }
\label{fig15}
 \end{figure}

In the proof of \cite[Lemma 7.2]{N5}, we showed that  
$f_i+h(\sigma_i, e)+h(\sigma_{j}, e) \sim f_{j} +h(\sigma_i, e)+h(\sigma_{j}, e)$, 
for $|i-j|=1$, $i,j \in \{1,\ldots, N-1\}$. We show Lemma \ref{lem4-1-2} by a similar method (see also \cite{Kamada02}). The relations (\ref{eq4-4y}) and (\ref{eq4-4s}) are also shown from Lemmas \ref{lem3-1}--\ref{lem3-3}.
 
\begin{proof}[Proof of Lemma \ref{lem4-1-2}]
Assume that we have two copies of $f_i$, and one element of $\{f_j, h(\sigma_j,e), h(\sigma_j, \sigma_k^\epsilon), h(e, \sigma_j^\epsilon), h(\sigma_k, \sigma_j^\epsilon) \}$, denoted by $h$, where $|i-j|=1$ and $|j-k|>1$. Moving one of $f_i$ through an arc of the label $j$ as indicated in Fig. \ref{fig16} (see also Fig. \ref{fig7}), we deform $f_i$ to a free edge with the label $i$ surrounded by a chart loop of the label $j$. By C-moves, this deforms to a free edge with the label $j$ surrounded by a chart loop $\rho$ of the label $i$; see Fig. \ref{fig8}. Applying a CI-M2 move between $\rho$ and the other $f_i$, we can eliminate $\rho$ (Lemma \ref{lem2-3}). Thus we have $f_i$, $f_j$ and $h$. By Lemma \ref{lem2-3}, using $f_j$ to eliminate the chart loop of the label $j$, we have the required result. 
\end{proof}
\begin{figure}[ht]
\centering
\includegraphics*[height=5cm]{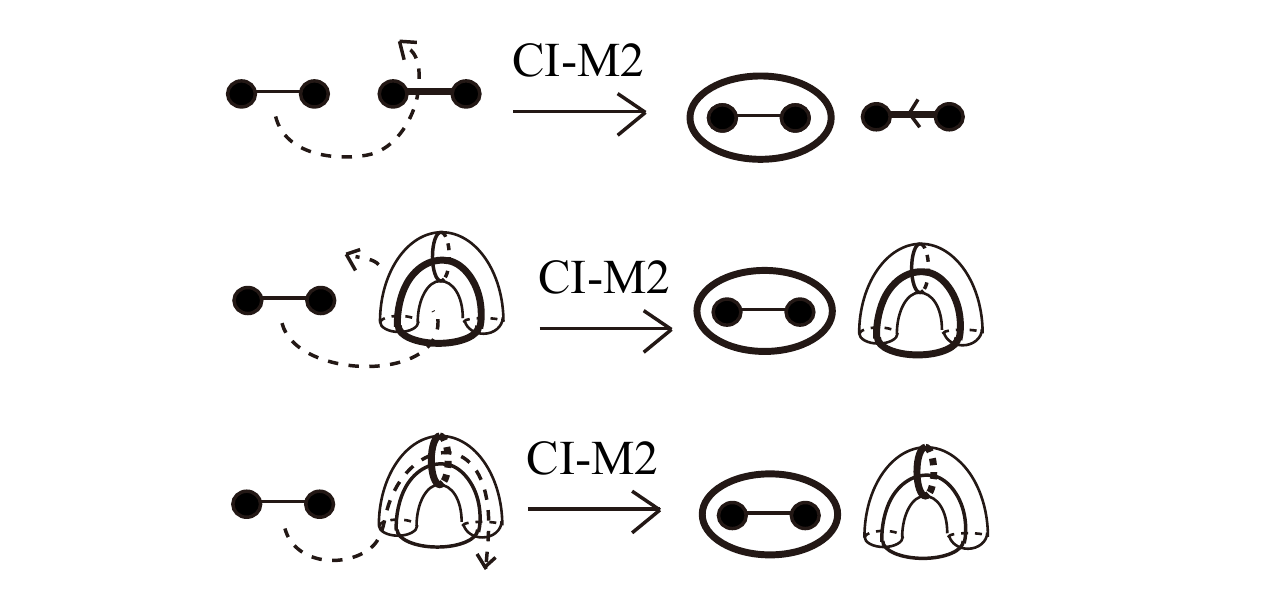}
\caption{Making a chart loop surrounding a free edge. We omit labels and orientations of chart edges. }
\label{fig16}
\end{figure}

\begin{lemma}\label{lem4-8}
We denote by $h_i$ a fixed 1-handle $h(\sigma_i, \sigma_k^\epsilon)$ or $h(\sigma_k, \sigma_i^\epsilon)$ for $|i-k|>1$ and $\epsilon \in \{+1, -1\}$. For $|i-j|=1$, 
\begin{eqnarray}
f_{i}+f_{j}+h_i \sim f_{j}+f_{j}+h_i, \\
f_i+h_i+h_{j} \sim f_{j}+h_i+h_{j}.
\end{eqnarray}

\end{lemma}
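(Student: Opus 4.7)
The plan is to imitate the argument of Lemma \ref{lem4-1-2}, using the chart loops carried by $h_i$ and $h_j$ as stand-ins for auxiliary free edges of labels $i$ and $j$. The key observation is that each $h_i$, in either of the two listed forms $h(\sigma_i,\sigma_k^\epsilon)$ or $h(\sigma_k,\sigma_i^\epsilon)$, carries a chart loop of label $i$ (parallel to the core or to the cocore respectively); analogously $h_j$ carries a chart loop of label $j$. Because the second loop on $h_i$ (resp.\ $h_j$) has label $k$ with $|i-k|>1$ (resp.\ $|j-k|>1$), CII-moves let this extra loop commute past any label-$i$ (resp.\ label-$j$) arcs, so the additional structure never obstructs the deformations we will perform.

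For the first relation $f_i+f_j+h_i\sim f_j+f_j+h_i$, I would first push $f_i$ through an arc of label $j$ supplied by $f_j$, exactly as in Fig.~\ref{fig16} of the proof of Lemma \ref{lem4-1-2}, so that $f_i$ becomes encircled by a chart loop of label $j$. The C-moves indicated in Fig.~\ref{fig8} then relabel this configuration as a free edge of label $j$ inside a chart loop of label $i$. This surrounding label-$i$ loop has no crossings, so an adapted version of Lemma \ref{lem2-3}(2), with the auxiliary $h(\sigma_i,e)$ replaced by $h_i$, uses the label-$i$ loop on $h_i$ to eliminate the encircling loop by a CI-M2 move, with CII-moves absorbing any interaction with the label-$k$ loop on $h_i$. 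What is left is precisely $f_j+f_j+h_i$.

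The second relation $f_i+h_i+h_j\sim f_j+h_i+h_j$ is proved by the identical sequence of steps, except that the label-$j$ arc used to encircle $f_i$ is now taken from the label-$j$ chart loop of $h_j$ rather than from a free edge; the subsequent relabeling and elimination of the resulting label-$i$ chart loop by means of $h_i$ are unchanged. The main point to verify, and the only real obstacle, is precisely this enhancement of Lemma \ref{lem2-3}(2): that one can still use the label-$i$ chart loop of $h_i$ to eliminate a nearby label-$i$ chart loop without crossings, and similarly extract a fresh label-$j$ arc from $h_j$. Both amount to sliding the relevant core or cocore loop across the neighboring loop, while repeatedly pushing the auxiliary label-$k$ loop out of the way by CII-moves, which is legitimate because the relevant labels differ by more than one. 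Once this small strengthening of Lemma \ref{lem2-3}(2) is in place, both equivalences follow directly.
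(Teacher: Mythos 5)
Your proposal follows essentially the same route as the paper, whose own proof of this lemma is literally ``by a similar argument as in the proof of Lemma \ref{lem4-1-2}'': encircle $f_i$ by a label-$j$ loop drawn from $f_j$ or from the label-$j$ loop of $h_j$ (Figs.\ \ref{fig16} and \ref{fig7}), relabel via Fig.\ \ref{fig8} to get $f_j$ inside a label-$i$ loop, and eliminate that loop using the label-$i$ loop of $h_i$, with CII-moves disposing of the label-$k$ loop since $|i-k|>1$. The one point you (and the paper) leave implicit is that when the label-$i$ loop of $h_i$ is the cocore-parallel one (the case $h(\sigma_k,\sigma_i^\epsilon)$), it cannot be slid off the tube around the path of the encircling loop as in Lemma \ref{lem2-3}$(2)$, so one should first use the triviality of the 1-handle (Assumption \ref{assump1}, giving $h(e,\sigma_i)\sim h(\sigma_i,e)$ as in the Remark after Lemma \ref{lem3-3}) to trade it for a core-parallel loop before performing the elimination.
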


\begin{proof}
By a similar argument as in the proof of Lemma \ref{lem4-1-2}, we have the required result. 
\end{proof}

\begin{proof}[Proof of Lemma \ref{lem4-2}]
Assume that we have $f_i+h(\sigma_k, \sigma_{j}^\epsilon)+h(e,e)$, where $|i-j|=1$ and $|j-k|>1$. 
Applying Lemma \ref{lem2-3} using $f_i$, we make a chart loop of the label $i$ along the core loop of $h(e,e)$, and we have 
\[
f_i+h(\sigma_k, \sigma_{j}^\epsilon)+h(\sigma_i,e). 
\]
By Lemma \ref{lem4-8}, we have 
\[
f_{j}+h(\sigma_k, \sigma_{j}^\epsilon)+h(\sigma_i,e). 
\]
By Lemma \ref{lem2-3} using $f_{j}$, we eliminate the chart loop of the label $j$, and we have 
\[
f_{j}+h(\sigma_k, e)+h(\sigma_i,e), 
\]
which implies the required result. 
\end{proof}
 
\begin{proof}[Proof of Lemma \ref{lem4-3}]
Assume that we have $f_i+f_{i+1}+h(e, \sigma_{i+2}^\epsilon)$. 
Applying Lemma \ref{lem2-3} using $f_i$, we make a chart loop of the label $i$, and we have 
\[
f_i+f_{i+1}+h(\sigma_i, \sigma_{i+2}^\epsilon). 
\]
By Lemma \ref{lem4-8}, we have 
\[
f_{i+1}+f_{i+1}+h(\sigma_{i}, \sigma_{i+2}^\epsilon).
\]
By Lemma \ref{lem4-8} again, we have
\[
f_{i+1}+f_{i+2}+h(\sigma_{i}, \sigma_{i+2}^\epsilon).
\]
By Lemma \ref{lem2-3} using $f_{i+2}$, we eliminate the chart loop with the label $i+2$, and we have 
\[
f_{i+1}+f_{i+2}+h(\sigma_{i}, e). 
\]
Applying Lemma \ref{lem3-3} twice, we have
\[
f_{i}+f_{i+1}+h(\sigma_{i+2}, e), 
\]
which implies the required result. 
\end{proof}

\begin{proof}[Proof of Lemma \ref{lem4-4}]
We show (\ref{eq4-8a}). 
Assume that we have $f_i+h(e, \sigma_{i+1}^\epsilon)+h(e, \sigma_{i+2}^\delta)$. By Lemma \ref{lem2-3} using $f_i$, we make a chart loop with the label $i$, and we have 
\[
f_i+h(e, \sigma_{i+1}^\epsilon)+h(\sigma_i, \sigma_{i+2}^\delta).
\]
By Lemma \ref{lem4-8}, we have 
\[
f_{i+1}+h(e, \sigma_{i+1}^\epsilon)+h(\sigma_i, \sigma_{i+2}^\delta).
\]
By Lemma \ref{lem2-3} using $f_{i+1}$, we eliminate the chart loop with the label $i+1$, and we have 
\[
f_{i+1}+h(e,e)+h(\sigma_i, \sigma_{i+2}^\delta).
\]
By Lemma \ref{lem2-3} using $f_{i+1}$, we make a chart loop with the label $i+1$, and we have 
\[
f_{i+1}+h(\sigma_{i+1},e)+h(\sigma_i, \sigma_{i+2}^\delta).
\]
By Lemma \ref{lem4-8}, 
\[
f_{i+2}+h(\sigma_{i+1},e)+h(\sigma_i, \sigma_{i+2}^\delta).
\]
By Lemma \ref{lem2-3} using $f_{i+2}$, we eliminate the chart loop with the label $i+2$, and we have 
\[
f_{i+2}+h(\sigma_{i+1},e)+h(\sigma_i, e).
\]
Applying Lemma \ref{lem3-3} twice, we have
\[
f_{i}+h(\sigma_{i+1},e)+h(\sigma_{i+2}, e),
\]
which implies (\ref{eq4-8a}). 

We show (\ref{eq4-9a}).
Assume that we have $f_i+h(e, \sigma_{i+1}^\epsilon)+h(\sigma_{i+2}, e)$. By Lemma \ref{lem2-3} using $f_i$, we make a chart loop with the label $i$, and we have 
\[
f_i+h(e, \sigma_{i+1}^\epsilon)+h(\sigma_{i+2}, \sigma_i).
\]
By Lemma \ref{lem4-8}, we have 
\[
f_{i+1}+h(e, \sigma_{i+1}^\epsilon)+h(\sigma_{i+2}, \sigma_i).
\]
By Lemma \ref{lem2-3} using $f_{i+1}$, we eliminate the chart loop with the label $i+1$, and we have 
\[
f_{i+1}+h(e, e)+h(\sigma_{i+2}, \sigma_i). 
\]
By a similar argument as in the case (\ref{eq4-8a}), we have 
\[
f_{i}+h(\sigma_{i+1}, e)+h(\sigma_{i+2}, e), 
\]
which implies (\ref{eq4-9a}). 
The last relation (\ref{eq4-10a})
is shown similarly. 
\end{proof}

\section*{Acknowledgements}
The author would like to thank Professor Seiichi Kamada for his helpful comments. 
The author was partially supported by JSPS KAKENHI Grant Numbers 15H05740 and 15K17532.

\end{document}